\documentclass[12pt,leqno]{article}

\setlength{\textheight}{20cm}
\setlength{\evensidemargin}{-0.5cm}
\setlength{\oddsidemargin}{-0.5cm}
\setlength{\textwidth}{18cm}

\usepackage{time}
\usepackage{graphicx}
\usepackage{multicol}
\usepackage{amsmath}
\usepackage{amssymb}
\usepackage{amsfonts}
\usepackage{latexsym}
\usepackage[T1]{fontenc}
\usepackage{subfigure}
\usepackage{float}
\usepackage{mathrsfs}
\usepackage{epstopdf}

\newtheorem{thm}{Theorem}
\newtheorem{lem}{Lemma}

\newcommand{\dm}[1]{{\displaystyle{#1}}}
\newcommand{\integ}[4]{\dm{\int_{#1}^{#2}{#3}{\,d\,}{\!#4}}}
\newcommand{\msf}[1]{{\sf #1}}

\newenvironment{proof}
{\begin{sloppypar}\noindent{\sf Proof: }}
	{~\eop
\medskip
\end{sloppypar}}

\def\eop{\mbox{\quad{\vrule height7pt width3pt depth0pt}}}
\def\ilbr{{{[\![}}}
\def\irbr{{{\,]\!]}}}
\def\lbr{{[}}
\def\rbr{{]}}
\def\RR{\mathbb{R}}
\def\F{\mathcal{F}}
\def\I{\mathcal{I}}
\def\J{\mathcal{J}}
\def\C{\mathcal{C}}
\def\Q{\mathcal{Q}} 
\def\O{\mathcal{O}}

\def\V{\mathcal{V}}
\def\X{\mathcal{X}}
\def\pn{p_{\!{_{_n}}}\!}
\def\Rpo{\RR^{\pn{\times}1}}

\begin{document}

\title{Two Numerical Approaches for Nonlinear\\ Weakly Singular Integral Equations}

\author{
M. Ahues\footnote{Former professor at Université de Lyon, Saint-Étienne, France. Email: mario.ahues@gmail.com}\;, 
F. Dias d'Almeida\footnote{Centro de Matemática, Universidade do Porto, Portugal. Email: falmeida@fe.up.pt}\;,
R. Fernandes\footnote{Centro de Matemática, Universidade do Minho, Portugal. Email: rosario@math.uminho.pt}\;,  
P.~B. Vasconcelos\footnote{Centro de Matemática, Universidade do Porto, Portugal. Email: pjv@fep.up.pt}
}

\date{}

\maketitle

\abstract{\noindent
Singularity subtraction for linear weakly singular Fredholm integral equations of the second kind is generalized to nonlinear integral equations. Two approaches are presented: The Classical Approach discretizes the nonlinear problem, and uses some finite dimensional linearization process to solve numerically the discrete problem. Its convergence is proved under mild hypotheses on the nonlinearity and the quadrature rule of the singularity subtraction scheme. The New Approach is based on linearization of the problem in its infinite dimensional setting, and discretization of the sequence of linear problems by singularity subtraction. It is more efficient than the former, as two numerical experiments confirm.}

\section{A Brief Introduction to Singularity Subtraction}\label{sec:Intro}

The reference Banach space is the set $\X:=C^0(\lbr a\,,b\rbr,\RR)$ of continuous functions from $\lbr a\,,b\rbr$ to $\RR$, with the supremum norm. We consider the Urysohn integral operator $K$ defined by
\begin{eqnarray*}
K(x)(s)&\!\!:=\!\!&\integ{a}{b}{\!\!\!\!g(|s-t|)N(s,t,x(t))}{t}\mbox{ for all }x\in\X,\ s\in\lbr a\,,b\rbr,
\end{eqnarray*}
where $g$ is a weakly singular function in the following sense:
\begin{enumerate}
\item $g(0^+)=+\infty$
\item $g\in L^1(\lbr0\,,b\!-\!a\rbr\,,\RR)$
\item $g$ is either 
\begin{itemize}
\item[({3a})] a continuous decreasing nonnegative function on $\rbr 0\,,b\!-\!a\rbr$,
\end{itemize}
or
\begin{itemize}
\item[({3b})] a continuous decreasing nonnegative function on $\rbr 0\,,(b\!-\!a)/2\rbr$, symmetric with respect to the midpoint of $\lbr0\,,b\!-\!a\rbr$.
\end{itemize}
\end{enumerate}
A typical example of the case (3a) is
$$
g(r):=\dfrac{1}{2\sqrt{r}}\mbox{ for all }r\in\rbr0\,,1\rbr,
$$
which will be treated numerically in Subsection \ref{subsec:1}, and one of the case (3b) is 
$$
g(r):=\log2-\log(1-\cos 2\pi r)\mbox{ for all }r\in\rbr0\,,1\lbr,
$$
where $\log$ denotes the Neperian logarithm. This example will be handled in Subsection \ref{subsec:2}.

\medskip

\noindent
The factor $N$, containing the values $x(t)\in\RR$ of the functional variable $x\in\X$ for $t\in\lbr a\,,b\rbr$, is a continuous function
\begin{eqnarray*}
N:\lbr a\,,b\rbr \times\lbr a\,,b\rbr \times\RR&\to&\RR\\
(s,t,u)&\mapsto&N(s,t,u)
\end{eqnarray*}
with continuous partial derivative with respect to the third variable.

\medskip

\noindent
Then $K$ maps $\X$ into itself, is compact and  Fréchet-differentiable at any point of $\X$.

\medskip

\noindent
When $N(s,t, x(t)):=\kappa(s,t)\,x(t)$ for some continuous function $\kappa : \lbr a\,,b\rbr {\times}\lbr a\,,b\rbr \to\RR$, then $K$ is a linear bounded operator from $\X$ into itself.

\medskip

\noindent
In this paper, we are interested in the general, possibly nonlinear, case.

\medskip

\noindent
The main idea of the singularity subtraction method is to compensate the singularity of the function $(s,t)\mapsto g(|s-t|)$ along the diagonal $s=t$, by multiplying $g(|s-t|)$ by the factor $N(s,t, x(t))-N(s,s,x(s))$ that tends to $0$ as $t\to s$.

\medskip

\noindent
This leads to rewrite $K$ as
\begin{eqnarray}\label{K-ss}
K(x)(s)&=&\integ{a}{b}{\!\!\!g(|s-t|)(N(s,t,x(t))\!-\!N(s,s,x(s)))}{t}
+N(s,s,x(s))\integ{a}{b}{\!\!\!g(|s\!-\!t|)}{t}.
\end{eqnarray}
The singularity subtraction method builds an approximation of $K$ as it is written in \eqref{K-ss}, and, as described in \cite{An1981} for the linear case, it is a double approximation scheme consisting of {\bf truncation} and {\bf numerical integration}.

\medskip

\noindent
The ideas worked out in \cite{AhLaLi2001,An1981} for the linear case, are extended here to the nonlinear case.

\medskip

\noindent{\bf Truncation:} Given $\delta\in\;\rbr 0\,,b-a\lbr$, we replace $g$ with the so-called $\delta$-truncated approximation $g_{_\delta}$. This function coincides with $g$ outside an interval of length $\delta$, contaning the abscissa that provoques the singularity, and is constantly equal to $g(\delta)$ in that interval. Hence $g_{_\delta}$ is a continuous function.  In the sequence of singularity subtraction approximations, the role of $\delta$ is played by a sequence $(\delta_{_n})_{n\ge2}$ in $\rbr 0\,,b-a\lbr$ leading to the function $g_{_{_{\!\delta_{_n}}}}$ defined by
\begin{eqnarray*}
g_{_{\delta_{_{n}}}}\!\!(r)&\!\!:=\!\!&
\left\{
\begin{array}{ll}
g(\delta_{_n})&\mbox{ for all }r\in\lbr0\,,\delta_{_n}\rbr,\\
g(r)&\mbox{ for all }r\in\,\rbr \delta_{_n}\,,b-a\rbr 
\end{array}
\right.
\end{eqnarray*}
for a function $g$ satisfying (3a), or
\begin{eqnarray*}
g_{_{\delta_{_{n}}}}(r)&\!\!:=\!\!&
\left\{
\begin{array}{ll}
g(\delta_{_{n}})&\mbox{ for all }r\in\lbr0\,,\delta_{_{n}}\rbr,\\
g(r)&\mbox{ for all }r\in\rbr\delta_{_{n}}\,,b\!-\!a\!-\!\delta_{_{n}}\lbr,\\
g(\delta_{_{n}})&\mbox{ for all }r\in\lbr b\!-\!a\!-\!\delta_{_{n}}\,,b\!-\!a\rbr
\end{array}\right.
\end{eqnarray*}
for a function $g$ satisfying (3b).

\medskip

\noindent{\bf Numerical integration:} To proceed with the singularity subtraction scheme ---like in the linear case--- we define a general grid with $n\ge2$ points on $\lbr a\,,b\rbr$:
\begin{eqnarray}\label{grid}
a\le \widehat{t}_{_{n,1}}<\widehat{t}_{_{n,2}}<\ldots<\widehat{t}_{_{n,n}}\le b.
\end{eqnarray}
This grid is called the basic grid, and it determines $n-1$ subintervals of $\lbr a\,,b\rbr$.

\medskip

\noindent
The first integral of \eqref{K-ss}, after replacing $g$ with $g_{_{_{\!\delta_{_n}}}}$, is approximated by some quadrature rule $\Q_{_{n}\!}$ with $\pn$ nodes depending on the nodes of the basic grid. For instance, if $\Q_{_{n}\!}$ is the composite trapezoidal rule, then the quadrature grid is the basic grid, so $\pn=n$; if $\Q_{_{n}\!}$ is the composite Simpson rule, then its nodes are the points of the basic grid and the mid-points of the corresponding subintervals, and hence $\pn=2n-1$. For some other rules $\Q_{_{n}\!}$, the nodes are the so-called Gaussian points that are obtained by shifting to each subinterval of the basic grid the zeros of a polynomial of a given degree $m$ belonging to a complete sequence of orthogonal polynomials in some particular Hilbert space, and hence $\pn=m(n-1)$. In this paper, numerical experiments will be worked out with the midpoint rectangular rule for which $\pn=n-1$.

\medskip

\noindent
As in \cite{AhLaLi2001}, intervals of integer numbers are denoted by $\ilbr\,\cdot\,,\,\cdot\irbr$. 

\medskip

\noindent
Let the $\pn$ nodes of $\Q_{_{n}\!}$ be denoted by $t_{\pn,j}$, $j\in\ilbr 1\,,\,\pn\irbr$, and numbered so that
$$
a \le t_{\pn,1} < \cdots < t_{\pn,\pn} \le b.
$$
Let the $\pn$ weights of $\Q_{_{n}\!}$ be denoted by $w_{\pn,j}$, $j\in\ilbr 1\,,\,\pn\irbr$. We suppose that they are all positive, and that there exists a constant $\hat{\gamma}>0$ satisfying
\begin{equation}
\label{H}
\sum\limits_{t_{\pn,j}\in\J}\!\!\!\!\!w_{\pn,j}\le\hat{\gamma}\,(d-c)\;\mbox{ when }\;a \le c < d \le b,\;\mbox{ and }\;\J\;\mbox{ is }\; \rbr c\,,d\rbr \;\mbox{ or }\:\lbr c\,,d\lbr 
\end{equation}
(cf. hypothesis (H) in \cite{AhLaLi2001}, page 225). Almost all commonly used quadrature rules satisfy \eqref{H}. The constant $\hat{\gamma}$ plays an active role in the proof of Theorem \ref{thm:classical}.

\medskip

\noindent
Ideally, $\integ{a}{b}{\!\!\!g(|s-t|)}{t}$ should be available in closed form, and this is sometimes possible. For instance, if a primitive $G$ of $g$ is available, then
\begin{eqnarray}\label{G}
f(s):=\integ{a}{b}{\!\!\!g(|s-t|)}{t}=G(s\!-\!a)+G(b\!-\!s)-2\,G(0)\mbox{ for all }s\in\lbr a\,,b\rbr.
\end{eqnarray}
Otherwise, a specially fine numerical quadrature formula should give an accurate value of this integral for any fixed value of $s\in\lbr a\,,b\rbr$.

\medskip

\noindent
Formula \eqref{G} is particularly useful to prove some properties of $f$ such as its symmetry with respect to $(a+b)/2$, and that it is a strictly increasing function on $\lbr a\,,(a+b)/2\rbr$ for example.

\medskip

\noindent
The exact problem to be solved numerically is the following: 
\begin{eqnarray}\label{Equation}
\mbox{For $y\in\X$, find $\varphi\in\X$ such that }\varphi=K(\varphi)+y,
\end{eqnarray}
i.e. find $\varphi\in\X$ such that
$$
\F(\varphi)=0,
$$
where $\F:\X\to\X$ is the operator defined by 
$$
\F(x):=x-K(x)-y\mbox{ for all }x\in\X.
$$
We assume that the linear bounded operator $I-K^{\,\prime}(\varphi):\X\to\X$ has a bounded inverse $(I-K^{\,\prime}(\varphi))^{-1}:\X\to\X$, where $K^{\,\prime}(x)$ denotes the Fréchet-derivative of $K$ at $x\in\X$. Hence $\varphi$ is an isolated solution of \eqref{Equation}.

\medskip

\noindent
Two approaches, both using a singularity subtraction scheme, are presented in this paper. The first one, called here the Classical Approach, has been proposed by the authors in \cite{AhDAFeVa2019}. The second one is presented here for the first time. It constitutes an extension to singularity subtraction approximations of the method developed by the authors for norm convergent projection approximations in \cite{GrAhDA2014,GrVaAh2016}.

\section{Basics on Convergence}\label{sec:basics}

Consider the basic grid \eqref{grid}, define $h_{n,j}:=\widehat{t}_{_{n,j+1}}-\widehat{t}_{_{n,j}}$ for $j\in\ilbr 1\,,\,n-1\irbr$, and 
$$
h_n:=\max\limits_{j\in\ilbr 1\,,\,n-1\irbr}h_{n,j}.
$$
The singularity subtraction technique, as presented in \cite{An1981}, relates truncation and numerical integration through the following condition on the sequences $(\delta_{_n})_{n\ge2}$ and $(\Q_{_{n}\!})_{n\ge 2}$: 

\medskip

\noindent
There exist constants $\alpha_1>0$ and $\beta_1>0$ such that

\medskip

\centerline{$\alpha_1 h_n\le \delta_{_n}\le\beta_1 h_n\;\mbox{ for all }\;n\ge2,$}

\medskip

\noindent
i.e. the width of truncation must tend to zero at the same rate as the mesh size.

\medskip

\noindent
These considerations lead to approximate $K$, as written in \eqref{K-ss}, by the following operator $K_n$: For all $x\in\X$, and $s\in\lbr a\,,b\rbr$,
\begin{eqnarray*}
K_n(x)(s)&\!\!:=\!\!&\sum\limits_{j=1}^{\pn}\!w_{\pn,j}g_{_{\delta_{_{n}}}}\!\!(|s\!-\!t_{\pn,j}|)\big(N(s,t_{\pn,j},x(t_{\pn,j}))-N(s,s,x(s))\big)\\ 
&&+N(s,s,x(s))\integ{a}{b}{\!\!\!g(|s-t|)}{t}.
\end{eqnarray*}

\medskip

\noindent
The approximate equation, to be solved exactly, is: Find $\varphi_{_{\!\!_n}\!}\in\X$ such that 
\begin{eqnarray}\label{Approx-newapp}
\varphi_{_{\!\!_n}\!}&=&K_n(\varphi_{_{\!\!_n}\!})+y,
\end{eqnarray}
i.e. $\F_n(\varphi_{_{\!\!_n}\!})=0$, where $\F_n:\X\to\X$ is the operator defined by 
$$
\F_n(x):=x-K_n(x)-y\mbox{ for all }x\in\X.
$$
Let $\stackrel{\mathrm{p}}\to$ denote pointwise convergence, $\stackrel{\mathrm{n}}\to$ norm convergence, $\stackrel{\mathrm{cc}}\to$ collectively compact convergence (cf. \cite{An1971}), and $\stackrel{\nu}\to$ the $\nu$-convergence (cf. \cite{AhLaLi2001}).

\medskip

\noindent
The Fréchet-derivatives $T:=K^{\,\prime}$ and $T_n:=(K_n)^{\,\prime}$ at $\varphi$ are given by:
\begin{eqnarray*}
\nonumber
(T(\varphi) f)(s)&=& \integ{a}{b}{\!\!\!g(|s-t|)\dfrac{\partial N}{\partial u}(s,t,\varphi(t))f(t)}{t},\\
(T_n(\varphi)f)(s)&=& \sum\limits_{j=1}^{\pn}\!w_{\pn,j}\,g_{_{\delta_{_{n}}}}\!\!(|s\!-\!t_{\pn,j}|)\dfrac{\partial N}{\partial u}(s,t_{\pn,j},\varphi (t_{\pn,j}))f(t_{\pn,j})\\
\nonumber&& -\sum\limits_{j=1}^{\pn}\!w_{\pn,j}\,g_{_{\delta_{_{n}}}}\!\!(|s\!-\!t_{\pn,j}|)\dfrac{\partial N}{\partial u}(s,s,\varphi(s))f(s)\\
\nonumber&&
+\dfrac{\partial N}{\partial u}(s,s,\varphi(s))f(s) \integ{a}{b}{\!\!\!g(|s-t|)}{t}
\end{eqnarray*}
for $f\in\X$, $s\in \lbr a\,,b\rbr$.

\medskip

\noindent
We define, for all $x\in\X$, and $s\in\lbr a\,,b\rbr$,
\begin{eqnarray*}
(U x)(s)&\!\!:=\!\!&\integ{a}{b}{\!\!\!\!g(|s-t|)x(t)}{t},\\\\
(U_n x)(s)&\!\!:=\!\!&\sum\limits_{j=1}^{\pn}\!w_{\pn,j} g_{_{\delta_{_{n}}}}\!\!(|s\!-\!t_{\pn,j}|) \, x(t_{\pn,j}).
\end{eqnarray*}
By \eqref{H}, $U_n\stackrel{\mathrm{cc}}\to U$, so $U_n \stackrel{\mathrm{p}}\to U$ (cf. Proposition 4.18 in \cite{AhLaLi2001}, page 227).

\begin{lem}\label{lem:convs}
$T_n(\varphi) \stackrel{\mathrm{p}}\to T(\varphi)$ and $T_n(\varphi) \stackrel{\nu}\to T(\varphi)$.
\end{lem}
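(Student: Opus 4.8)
The plan is to separate from $T_n(\varphi)$ the single part that is not a genuine singularity–subtraction discretization and to treat it as a norm–null perturbation. Write $c(s,t):=\dfrac{\partial N}{\partial u}(s,t,\varphi(t))$ and $d(s):=c(s,s)$, both continuous on the relevant compact sets, and let $\mathbf 1$ denote the constant function $1$. A direct rearrangement of the three summands defining $T_n(\varphi)$ shows $T(\varphi)=V^c$ and $T_n(\varphi)=V^c_n+R_n$, where
\begin{eqnarray*}
(V^cf)(s)&:=&\integ{a}{b}{g(|s-t|)c(s,t)f(t)}{t},\\
(V^c_nf)(s)&:=&\sum_{j=1}^{\pn}w_{\pn,j}\,g_{_{\delta_{_{n}}}}\!\!(|s-t_{\pn,j}|)\,c(s,t_{\pn,j})\,f(t_{\pn,j}),\\
(R_nf)(s)&:=&d(s)\,f(s)\,\big((U\mathbf 1)(s)-(U_n\mathbf 1)(s)\big).
\end{eqnarray*}
Here $R_n$ only records the mismatch between the exactly evaluated $(U\mathbf 1)(s)=\integ{a}{b}{g(|s-t|)}{t}$ and its quadrature $(U_n\mathbf 1)(s)$. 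Since $U_n\stackrel{\mathrm{p}}\to U$ gives $\|U\mathbf 1-U_n\mathbf 1\|_\infty\to0$, one gets $\|R_n\|\le\|d\|_\infty\,\|U\mathbf 1-U_n\mathbf 1\|_\infty\to0$, so $R_n\to0$ in operator norm.

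For the pointwise claim it is then enough to prove $V^c_n\stackrel{\mathrm{p}}\to V^c$. First I would fix $f\in\X$ and set $\Psi(s,t):=c(s,t)f(t)$, continuous on $\lbr a\,,b\rbr\times\lbr a\,,b\rbr$. Given $\varepsilon>0$, approximate $\Psi$ uniformly by a finite sum $\sum_k\phi_k(s)\chi_k(t)$ with $\phi_k,\chi_k\in\X$ (Stone--Weierstrass). Then $(V^c_nf)(s)-(V^cf)(s)$ splits into a quadrature/integral error on $\Psi-\sum_k\phi_k\chi_k$, which by positivity of the $w_{\pn,j}$ and of $g_{_{\delta_{_{n}}}}$ is bounded in modulus by $\varepsilon\big((U_n\mathbf 1)(s)+(U\mathbf 1)(s)\big)$, plus the finite sum $\sum_k\phi_k(s)\big((U_n\chi_k)(s)-(U\chi_k)(s)\big)$. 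Using $\sup_n\|U_n\|<\infty$ (contained in $U_n\stackrel{\mathrm{cc}}\to U$) the first part is at most $2\varepsilon\sup_n\|U_n\|$, while the second tends to $0$ in supremum norm by $U_n\stackrel{\mathrm{p}}\to U$; letting $n\to\infty$ and then $\varepsilon\to0$ yields $\|V^c_nf-V^cf\|_\infty\to0$. Together with $\|R_n\|\to0$ this gives $T_n(\varphi)\stackrel{\mathrm{p}}\to T(\varphi)$.

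For the $\nu$–convergence I would use its characterisation $\sup_n\|T_n(\varphi)\|<\infty$, $\|(T_n(\varphi)-T(\varphi))T(\varphi)\|\to0$ and $\|(T_n(\varphi)-T(\varphi))T_n(\varphi)\|\to0$. The uniform bound follows at once from the three summands and $\sup_n\|U_n\|<\infty$. The key step is that the family $\{V^c_n\}$ is collectively compact: by Arzel\`a--Ascoli I would establish equicontinuity of $\{V^c_nf:n\ge2,\ \|f\|_\infty\le1\}$ by writing $(V^c_nf)(s)-(V^c_nf)(s')$ and separating the $s$–dependence of the kernel from that of the amplitude $c$. The amplitude part is bounded by the modulus of continuity of $c$ times $\sup_n\|U_n\|$, while the kernel part is exactly $\sum_{j}w_{\pn,j}\,\big|g_{_{\delta_{_{n}}}}(|s-t_{\pn,j}|)-g_{_{\delta_{_{n}}}}(|s'-t_{\pn,j}|)\big|$, which by positivity of the nodal coefficients equals $\sup_{\|f\|_\infty\le1}|(U_nf)(s)-(U_nf)(s')|$ and is therefore uniformly small in $n$ because the relatively compact set $\bigcup_nU_n(B_\X)$ is equicontinuous. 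Since $V^c=T(\varphi)$ is a weakly singular integral operator, hence compact, $V^c_n\stackrel{\mathrm{cc}}\to V^c$ yields $\|(V^c_n-V^c)V^c\|\to0$ and $\|(V^c_n-V^c)V^c_n\|\to0$ (the bounded, pointwise–null sequence $V^c_n-V^c$ converges uniformly on the compact set $\overline{\bigcup_nV^c_n(B_\X)}$). Substituting $T_n(\varphi)-T(\varphi)=(V^c_n-V^c)+R_n$ into the two products and using $\|R_n\|\to0$ to absorb every term carrying a factor $R_n$ completes the verification of the $\nu$–convergence.

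The main obstacle is conceptual: the full family $\{T_n(\varphi)\}$ is \emph{not} collectively compact, since its exactly integrated term carries the pointwise value $f(s)$, which is not equicontinuous on the unit ball; hence one cannot simply invoke ``collectively compact $\Rightarrow\nu$''. Isolating this defect into the norm–null remainder $R_n$, so that only the genuinely compact family $\{V^c_n\}$ must be analysed, is what makes the argument work. The secondary difficulty---the joint $(s,t)$–dependence of the amplitude $c$---is dealt with by the Stone--Weierstrass reduction for the pointwise part and by the total–variation identity above for the collective compactness.
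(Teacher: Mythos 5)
Your proof is correct and follows essentially the same route as the paper's: you split $T_n(\varphi)$ into exactly the same two pieces the paper calls $S_n(\varphi)$ and $E_n(\varphi)$ (your $V^c_n$ and $R_n$), show the remainder is norm--null via $U_n e\to Ue$, and obtain pointwise and $\nu$--convergence from the collectively compact convergence of the Nystr\"om-type part to the compact limit $T(\varphi)$. The only difference is that you prove from first principles (Stone--Weierstrass for the pointwise part, Arzel\`a--Ascoli for collective compactness, and the stability of $\nu$-convergence under norm-null perturbations) what the paper imports by citation from Proposition 4.18 and Lemma 2.2 of Ahues--Largillier--Limaye and from Anselone.
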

\begin{proof}
As $T_n(\varphi)$ and $T(\varphi)$ are bounded linear operators, we use the results of \cite{An1981}.

\medskip

\noindent
Let us consider the decomposition $T_n(\varphi)=S_n(\varphi)+E_n(\varphi)$, where
\begin{eqnarray*}
(S_n(\varphi)f)(s)&\!\!:=\!\!&\sum\limits_{j=1}^{\pn}\!w_{\pn,j}\,g_{_{\delta_{_{n}}}}\!\!(|s\!-\!t_{\pn,j}|)\dfrac{\partial N}{\partial u}(s,t_{\pn,j},\varphi(t_{\pn,j}))f(t_{\pn,j}),\\
(E_n(\varphi)f)(s)&\!\!:=\!\!&\dfrac{\partial N}{\partial u}(s,s,\varphi(s))f(s)\Big(\integ{a}{b}{\!\!\!g(|s-t|)}{t}-\sum\limits_{j=1}^{\pn}\!w_{\pn,j}\,g_{_{\delta_{_{n}}}}\!\!(|s\!-\!t_{\pn,j}|)\Big)\\
&\!\!\,=\!\!&\dfrac{\partial N}{\partial u}(s,s,\varphi(s))f(s)\big((U e) (s) -(U_n e)(s)\big)
\end{eqnarray*}
for all $f\in\X$, and $e(s):=1$ for all $s \in \lbr a\,,b\rbr$.

\medskip

\noindent
Since $(s,t,u)\mapsto\dfrac{\partial N}{\partial u}(s,t,u)$ is a continuous function, and since \eqref{H} holds, then $T_n(\varphi)$ and $S_n(\varphi)$ satisfy the hypotheses of Proposition 4.18 in \cite{AhLaLi2001}, page 227, and $S_n(\varphi) \stackrel{\mathrm{cc}}\to T(\varphi)$. 

\medskip

\noindent
Hence $S_n(\varphi) \stackrel{\mathrm{p}}\to T(\varphi)$. Since $T(\varphi)$ is compact because $K$ is compact. $S_n(\varphi) \stackrel{\nu}\to T(\varphi)$.

\medskip

\noindent
For all $f\in\X$ such that $\|f\|=1$,
$$
\|E_n (\varphi)f\|\le\|U_ne-Ue\|\max\limits_{s\in\lbr a\,,b\rbr}\Big|\dfrac{\partial N}{\partial u}(s,s,\varphi(s))\Big|
$$
that tends to $0$ as $n\to\infty$ because $U_n \stackrel{\mathrm{p}}\to U$. Hence $E_n (\varphi)\stackrel{\mathrm{n}}\to O$, so $T_n(\varphi) \stackrel{\mathrm{p}}\to T(\varphi)$.

\medskip

\noindent
This proves that $T_n(\varphi) \stackrel{\nu}\to T(\varphi)$ (cf. Lemma 2.2 (b) (i) in \cite{AhLaLi2001}, page 73).	
\end{proof}

\begin{lem}\label{lem:bdn}
For all $n$ large enough, $I-T_n(x)$ is invertible for all $x$ close enough to $\varphi$, and the inverse operator $(I-T_n(x))^{-1}$ is uniformly bounded with respect to $n$.
\end{lem}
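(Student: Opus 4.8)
The plan is to reduce the assertion to a Neumann-series perturbation of the single operator $I-T_n(\varphi)$, whose invertibility comes for free from Lemma \ref{lem:convs}.

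First I would exploit the $\nu$-convergence $T_n(\varphi)\stackrel{\nu}\to T(\varphi)$ together with the standing assumption that $I-T(\varphi)=I-K^{\,\prime}(\varphi)$ has a bounded inverse. The fundamental perturbation theorem for $\nu$-convergent sequences (cf. \cite{AhLaLi2001}) then yields an integer $n_0$ and a constant $C>0$ such that $I-T_n(\varphi)$ is invertible for every $n\ge n_0$, with $\|(I-T_n(\varphi))^{-1}\|\le C$ uniformly in $n$. This is precisely the role for which $\nu$-convergence is designed, since the sequence $(T_n(\varphi))$ is not norm convergent.

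Next I would establish a continuity estimate for $x\mapsto T_n(x)$ near $\varphi$ that is uniform in $n$. Writing $T_n(x)-T_n(\varphi)$ from the explicit formula for $T_n$, every term is a quadrature sum (or the closed-form integral $f$) multiplied by a difference of the form $\frac{\partial N}{\partial u}(s,\cdot,x(\cdot))-\frac{\partial N}{\partial u}(s,\cdot,\varphi(\cdot))$. Since $\partial N/\partial u$ is continuous, hence uniformly continuous on the relevant compact sets, these differences are dominated by a modulus of continuity $\omega(\|x-\varphi\|)$ independent of $n$. The accompanying quadrature weights are controlled uniformly: the factor $\sum_{j}w_{\pn,j}\,g_{\delta_n}(|s-t_{\pn,j}|)$ equals $(U_n e)(s)$ with $e\equiv1$, and the collective compactness $U_n\stackrel{\mathrm{cc}}\to U$ forces $\sup_n\|U_n\|<\infty$. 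Combining, one obtains a bound $\|T_n(x)-T_n(\varphi)\|\le c\,\omega(\|x-\varphi\|)$ valid for all $n\ge n_0$ with a single constant $c$.

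Finally I would close the argument by perturbation. Picking $\rho>0$ so small that $\|T_n(x)-T_n(\varphi)\|\le \tfrac{1}{2C}$ whenever $\|x-\varphi\|<\rho$ (possible by the previous step, uniformly in $n$), I factor
$$I-T_n(x)=(I-T_n(\varphi))\big[I-(I-T_n(\varphi))^{-1}(T_n(x)-T_n(\varphi))\big].$$
The bracketed factor is invertible by the Neumann series, with inverse norm at most $2$, whence $I-T_n(x)$ is invertible and $\|(I-T_n(x))^{-1}\|\le 2C$ for all $n\ge n_0$ and all $x$ with $\|x-\varphi\|<\rho$, which is the claim. The step I expect to be the main obstacle is the uniform-in-$n$ continuity estimate: its proof rests entirely on coupling the uniform continuity of $\partial N/\partial u$ with the uniform boundedness of the quadrature operators $U_n$ supplied by hypothesis \eqref{H}. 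Once that estimate is secured, both the $\varphi$-invertibility and the final perturbation are standard.
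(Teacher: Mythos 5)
Your proposal follows essentially the same route as the paper: first use the $\nu$-convergence $T_n(\varphi)\stackrel{\nu}\to T(\varphi)$ together with the invertibility of $I-T(\varphi)$ to obtain invertibility and a uniform bound for $(I-T_n(\varphi))^{-1}$, then pass to nearby $x$ by a continuity/perturbation argument. The only difference is that where the paper simply says ``by continuity,'' you supply the missing detail --- the uniform-in-$n$ estimate $\|T_n(x)-T_n(\varphi)\|\le c\,\omega(\|x-\varphi\|)$ via the uniform continuity of $\partial N/\partial u$ and the uniform boundedness of $(U_n)$ from \eqref{H}, followed by a Neumann series --- which is a correct and welcome elaboration of the same argument.
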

\begin{proof}
Since $(I-T(\varphi))^{-1}$ exists, and $T_n(\varphi)\stackrel{\nu}\to T(\varphi)$, there exists $n_0\ge2$ such that, for all $n\ge n_0 $,
$$
\|(I-T(\varphi))^{-1}\|\;\| \big(T_n(\varphi) - T(\varphi) \big)T_n (\varphi) \| < 1.
$$
Hence $(I-T_n(\varphi))^{-1}$ exists and  is uniformly bounded (cf. \cite{An1981}, page 413). By continuity, the same holds for $(I-T_n(x))^{-1}$ for all $x$ close enough to $\varphi$.
\end{proof}

\begin{lem}\label{lem-loc-inv}
$\F_n$ is locally invertible with continuous inverse in a neighborhood of $0$.
\end{lem}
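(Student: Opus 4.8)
The plan is to apply the Banach-space Inverse Function Theorem to $\F_n$ at the point $\varphi$, in a form whose neighborhoods are uniform in $n$. First I would observe that $\F_n$ is continuously Fréchet-differentiable with $\F_n^{\,\prime}(x)=I-T_n(x)$: since $K_n$ is a finite sum of terms assembled from $N$ and from the point evaluations $x\mapsto x(t_{\pn,j})$ and $x\mapsto x(s)$, differentiation under the finite sum together with the continuity of $\partial N/\partial u$ shows that $x\mapsto T_n(x)$ is continuous. By Lemma~\ref{lem:bdn}, for every $n$ large enough $\F_n^{\,\prime}(\varphi)=I-T_n(\varphi)$ is invertible with $\|(I-T_n(\varphi))^{-1}\|\le M$, where $M$ does not depend on $n$.

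Next I would recast the equation $\F_n(x)=z$, for $z$ in a small ball about $0$, as the fixed-point problem $x=\Phi_{n,z}(x)$ with $\Phi_{n,z}(x):=x-(I-T_n(\varphi))^{-1}(\F_n(x)-z)$, whose fixed points are exactly the solutions of $\F_n(x)=z$. A direct computation gives $\Phi_{n,z}^{\,\prime}(x)=(I-T_n(\varphi))^{-1}(T_n(x)-T_n(\varphi))$, so $\|\Phi_{n,z}^{\,\prime}(x)\|\le M\,\|T_n(x)-T_n(\varphi)\|$. The heart of the matter is therefore to produce a radius $r>0$, independent of $n$, such that $\|T_n(x)-T_n(\varphi)\|\le 1/(2M)$ whenever $\|x-\varphi\|\le r$; on the ball $\{x:\|x-\varphi\|\le r\}$ this makes $\Phi_{n,z}$ a contraction of ratio at most $1/2$.

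This equicontinuity estimate, uniform in $n$, is the main obstacle. I would extract it from the formula for $T_n$: the difference $T_n(x)-T_n(\varphi)$ involves only the increments of $\partial N/\partial u$ in its third argument, between $x(t_{\pn,j})$ and $\varphi(t_{\pn,j})$ and between $x(s)$ and $\varphi(s)$. As long as $\|x-\varphi\|\le r$, all these arguments stay in a fixed compact subset of $\lbr a\,,b\rbr\times\lbr a\,,b\rbr\times\RR$, so the uniform continuity of $\partial N/\partial u$ bounds each increment by a modulus $\omega(r)$ with $\omega(r)\to0$ as $r\to0$. Meanwhile \eqref{H} bounds the weighted sums $\sum_{j}w_{\pn,j}\,g_{\delta_n}(|s-t_{\pn,j}|)=(U_ne)(s)$ uniformly in $n$ and $s$ (this is the same boundedness used to obtain $U_n\stackrel{\mathrm{cc}}\to U$), while $\integ{a}{b}{g(|s-t|)}{t}$ is bounded. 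Combining these yields $\|T_n(x)-T_n(\varphi)\|\le C\,\omega(r)$ with $C$ independent of $n$, and choosing $r$ so small that $C\,\omega(r)\le 1/(2M)$ settles this step uniformly in $n$.

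Finally I would close the argument. Since $\Phi_{n,z}$ has contraction ratio $1/2$ on the closed ball of radius $r$ about $\varphi$, it maps that ball into itself as soon as $\|\Phi_{n,z}(\varphi)-\varphi\|\le r/2$, i.e. $M\,(\|\F_n(\varphi)\|+\|z\|)\le r/2$. Now $\F_n(\varphi)=K(\varphi)-K_n(\varphi)$ because $\F(\varphi)=0$, and $\|\F_n(\varphi)\|\to0$ by the consistency of the singularity-subtraction scheme on the fixed continuous data attached to $\varphi$ (the same mechanism behind $U_ne\to Ue$). Hence, for $n$ large enough that $M\,\|\F_n(\varphi)\|\le r/4$, the Banach fixed-point theorem provides, for every $z$ with $\|z\|\le\rho:=r/(4M)$, a unique solution $x=:\F_n^{-1}(z)$ of $\F_n(x)=z$ in that ball. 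This defines a local inverse of $\F_n$ on $\{z:\|z\|\le\rho\}$, a neighborhood of $0$; its Lipschitz continuity (with constant $2M$, since the Neumann series gives $\|\F_n^{\,\prime}(x)^{-1}\|\le 2M$ on the ball) follows from the uniform contraction estimate in the standard way, which proves the lemma.
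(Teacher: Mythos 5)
Your proposal is correct, and at its core it takes the same route as the paper: local invertibility of $\F_n$ is deduced from the invertibility of $\F_n^{\,\prime}(\varphi)=I-T_n(\varphi)$ (Lemma~\ref{lem:bdn}) via the Inverse Function Theorem. The difference is one of execution. The paper invokes the Inverse Function Theorem in a single sentence, which strictly speaking only yields that $I-K_n$ is a local homeomorphism from a neighborhood of $\varphi$ onto a neighborhood of $(I-K_n)(\varphi)=y+\F_n(\varphi)$; for this to give an inverse of $\F_n$ \emph{near $0$} one still needs $\|\F_n(\varphi)\|$ small, i.e.\ the pointwise convergence $\F_n(\varphi)\to\F(\varphi)=0$ of Lemma~\ref{lem:Fn-conv} and $n$ large. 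Your contraction-mapping version makes exactly this point explicit (the condition $M(\|\F_n(\varphi)\|+\|z\|)\le r/2$), and in addition produces a radius $r$ and a bound $\rho$ that are uniform in $n$, by combining the uniform bound on $\|(I-T_n(\varphi))^{-1}\|$ from Lemma~\ref{lem:bdn} with the uniform boundedness of $(U_ne)(s)$ coming from \eqref{H} and the truncation condition $\alpha_1h_n\le\delta_{_n}\le\beta_1h_n$. This uniformity is not claimed in the lemma and not provided by the paper's proof, but it is genuinely useful: it is implicitly what the proof of Theorem~\ref{thm:classical} relies on when it asserts that $(\F_n^{-1})^{\,\prime}(v_n(t))$ is uniformly bounded for $v_n(t)$ in a shrinking neighborhood of $0$. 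So your argument is a strictly more informative variant of the paper's; no gaps.
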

\begin{proof}
$I-K_n$ is a continuously differentiable operator from the Banach space $\X$ into itself. By the Inverse Function Theorem, $I-T_n(\varphi)$, being invertible, there is a  neighborhood of $\varphi$  where $I-K_n$ is invertible with continuous inverse in some neighborhood of $y$. Hence $\F_n^{-1}$ exists and is continuous in some neighborhood of $0$.	
\end{proof}

\begin{lem}\label{lem:Fn-conv}
${\big(K_n\big)}_{n \ge 2}$ is pointwise convergent to $K$, and ${\big(\F_n\big)}_{n \ge 2}$ is pointwise convergent to $\F$.
\end{lem}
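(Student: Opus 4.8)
The plan is to prove the pointwise convergence $K_n\stackrel{\mathrm{p}}\to K$ first and then read off $\F_n\stackrel{\mathrm{p}}\to\F$ for free. So I fix an arbitrary $x\in\X$ and aim to show $\|K_n(x)-K(x)\|\to0$ as $n\to\infty$; since the estimate will only use continuity of the data, the conclusion will hold for every $x$, which is exactly pointwise convergence of the operators.

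The guiding idea is to freeze the nonlinearity and fall back on the linear theory already exploited in Lemma \ref{lem:convs}. For the fixed $x$, set $\ell_x(s,t):=N(s,t,x(t))$; this is continuous on the compact square $\lbr a\,,b\rbr\times\lbr a\,,b\rbr$, being the composition of the continuous map $N$ with the continuous function $x$. With $e(s):=1$ for all $s$, one checks that $K(x)(s)=\int_a^b g(|s-t|)\,\ell_x(s,t)\,e(t)\,dt$ and that $K_n(x)$ is precisely the discretized singularity-subtraction operator built from the continuous weight $\ell_x$, evaluated at $e$ (here the subtraction term is $N(s,s,x(s))=\ell_x(s,s)$ and the exact integral $\int_a^b g(|s-t|)\,dt$ is retained). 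Thus the problem is reduced to the convergence of a single linear scheme applied to the fixed function $e$.

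I would then mirror the splitting of Lemma \ref{lem:convs}, writing $K_n(x)-K(x)=(A_ne-Ae)+B_ne$, where $A_n$ is the pure-quadrature operator with nodes $t_{\pn,j}$ and weights $w_{\pn,j}\,g_{\delta_n}(|s-t_{\pn,j}|)\,\ell_x(s,t_{\pn,j})$, $A$ is the integral operator with kernel $g(|s-t|)\,\ell_x(s,t)$, and $B_n$ is the correction $f\mapsto\ell_x(s,s)f(s)\big((Ue)(s)-(U_ne)(s)\big)$. Because $\ell_x$ is continuous and \eqref{H} holds, Proposition 4.18 of \cite{AhLaLi2001} gives $A_n\stackrel{\mathrm{cc}}\to A$, hence $A_n\stackrel{\mathrm{p}}\to A$, so $\|A_ne-Ae\|\to0$; and $\|B_ne\|\le\|U_ne-Ue\|\,\max_{s}|\ell_x(s,s)|\to0$ since $U_n\stackrel{\mathrm{p}}\to U$. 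This is verbatim the argument of Lemma \ref{lem:convs}, with $\partial N/\partial u$ replaced by $N$ and the general $f$ replaced by $e$. Hence $\|K_n(x)-K(x)\|\to0$, and as $x$ was arbitrary, $K_n\stackrel{\mathrm{p}}\to K$. Finally, for every $x$ one has $\F_n(x)-\F(x)=K(x)-K_n(x)$, so $\F_n\stackrel{\mathrm{p}}\to\F$ is immediate.

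The only real obstacle is the joint dependence of the kernel on $s$ and $t$: the factor $N(s,t,x(t))$ does not split as a function of $s$ times a function of $t$, so the scalar convergence $U_n\stackrel{\mathrm{p}}\to U$ cannot be invoked directly. Freezing $x$ to produce the continuous two-variable weight $\ell_x$ and recycling the collectively compact convergence of the associated linear scheme is exactly what circumvents this; the one point to verify carefully is that $\ell_x$ meets the continuity hypothesis of Proposition 4.18, which it does.
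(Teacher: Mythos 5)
Your argument is correct and follows essentially the same route as the paper: the paper likewise splits $K_n(x)$ into the pure-quadrature part $\widehat{K}_n(x)$ (your $A_ne$) plus the correction $N(s,s,x(s))\,(U-U_n)e(s)$, and disposes of the correction via $U_n\stackrel{\mathrm{p}}\to U$ exactly as you do. The only difference is in the main term: the paper re-runs the proof of Proposition 4.18 of \cite{AhLaLi2001} explicitly (the $\lambda_\delta+\mu_n+\eta_n$ decomposition with the frozen continuous kernel $N(s,t,x(t))$), whereas you invoke that proposition as a black box for the linear operator with kernel weight $\ell_x$ evaluated at $e$ --- which is legitimate, since $\ell_x$ is continuous on the compact square, precisely as in the application made in Lemma \ref{lem:convs}.
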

\begin{proof}
An auxiliary operator $\widehat{K}_n$ is used in the proof. For all $x\in\X$, and $s\in\lbr a\,,b\rbr$, define
\begin{eqnarray*}
\widehat{K}_n(x)(s)&\!\!:=\!\!& \sum\limits_{j=1}^{\pn}\!w_{\pn,j}\,g_{_{\delta_{_{n}}}}\!\!(|s\!-\!t_{\pn,j}|)N(s,t_{\pn,j},x(t_{\pn,j})).
\end{eqnarray*}
$K_n$ can be rewritten as
\begin{eqnarray*}
K_n(x)(s)&=&\widehat{K}_n(x)(s)+N(s,s,x(s))(U-U_n)e(s).
\end{eqnarray*}
Define 
$$
\sigma(x):=\max\limits_{(s,t)\in\lbr a\,,b\rbr{\times}\lbr a\,,b\rbr}|N(s,t,x(t))|,
$$
which is finite because of the continuity of $N$ in its three variables, and that of $x$ in its single one. In the linear case, $\sigma(x)=\rho\|x\|$ for some constant $\rho>0$. Now,
$$
|K_n(x)(s)-\widehat{K}_n(x)(s)|\le \sigma(x)\|(U-U_n)e\|\to0\mbox{ and }\|K_n(x)-\widehat{K}_n(x)\|\to0\mbox{ as }n\to\infty,
$$
since $U_n\stackrel{\rm p}\to U$. Following the ideas of the proof of Proposition 4.18 in \cite{AhLaLi2001}, we decompose
$$
\widehat{K}_n(x)-K(x)=\lambda_\delta+\mu_n+\eta_n,
$$
where $\lambda_\delta$, $\mu_n$ and $\eta_n$ are defined as follows. Let $\hat{\gamma}>0$ be the constant introduced in \eqref{H}. Given $\epsilon>0$, there exists $\delta\in\,\rbr 0\,,b-a\rbr$ such that $\integ{0}{\delta}{\!\!g(u)}{u}<\dfrac{\epsilon}{18}\min\{1\,,\,\dfrac{1}{3\hat{\gamma}}\}$. Set 
\begin{eqnarray*}
\lambda_\delta(s)&\!\!:=\!\!&\!\int\limits_{\max\{a,s-\delta\}}^{\min\{b,s+\delta\}}{\!\!\!\!\!\!\!\!\!(g(\delta)-g(|s-t|))N(s,t,x(t))}{dt},\\
\mu_n(s)\!&\!\!:=\!\!&\!\sum\limits_{j=1}^{\pn}\!w_{\pn,j}(g_{_{\delta_{_{n}}}}\!\!(|s\!-\!t_{\pn,j}|)-g_{_\delta}(|s\!-\!t_{\pn,j}|))N(s,t_{\pn,j},x(t_{\pn,j})),\\
\eta_n(s)&\!\!:=\!\!&\!\sum\limits_{j=1}^{\pn}\!w_{\pn,j}g_{_\delta}(|s\!-\!t_{\pn,j}|)N(s,t_{\pn,j},x(t_{\pn,j}))\!-\!\!\!\integ{a}{b}{\!\!\!\!g_{_\delta}(|s\!-\!t|)N(s,t,x(t))}{t}.
\end{eqnarray*}
Then the following upper bounds hold for all $n$ greater than some integer $n_0(x)$:
\begin{eqnarray*}
&&|\lambda_\delta(s)|\le6\;\sigma(x)\integ{0}{\delta}{\!\!\!g(u)}{u}\,\le\,\dfrac{\sigma(x)}{3}\epsilon,\\
&&|\mu_n(s)|\le\sigma(x)\big(\!\!\!\!\!\!\sum\limits_{|s-t_{\pn,j}|<\delta}\!\!\!\!\!\!w_{\pn,j}g_{_{\delta_{_{n}}}}\!\!(|s\!-\!t_{\pn,j}|)+2\hat{\gamma}\delta g(\delta)\big)\,\le\,\dfrac{\sigma(x)}{3}\epsilon,\\
&&|\eta_n(s)|\le\dfrac{\sigma(x)}{3}\epsilon.
\end{eqnarray*}
Since
\begin{eqnarray*}
|\widehat{K}_n(x)(s)-K(x)(s)|&\le&|\lambda_\delta(s)|+|\mu_n(s)|+|\eta_n(s)|\le\sigma(x)\epsilon,
\end{eqnarray*}
we conclude that $\widehat{K}_n\stackrel{\rm p}\to K$, $K_n\stackrel{\rm p}\to K$, and $\F_n\stackrel{\rm p}\to\F$.
\end{proof}

\section{The Classical Approach: Discretize First}\label{sec:ss}

We recall that $\big(\Q_{_{n}\!}\big)_{n\ge2}$ is a sequence of composite quadrature rules with nodes $\big(t_{\pn,j}\big)_{j=1}^{\pn}$ and weights $\big(w_{\pn,j}\big)_{j=1}^{\pn}$, satisfying \eqref{H}. 

\medskip

\noindent
If we take the values of \eqref{Approx-newapp} at $t_{\pn,i}$, $i\in\ilbr 1\,,\,\pn\irbr$, we get the following, possibly nonlinear, system of order $\pn$:
\begin{eqnarray*}
\msf{x}_n(i)&\!=\!\!&\sum\limits_{j=1}^{\pn}\!\!w_{\pn,j}\,g_{_{\delta_{_{n}}}}\!\!(|t_{\pn,i}\!-\!t_{\pn,j}|)( N(t_{\pn,i},t_{\pn,j},\msf{x}_n(j))\!-\!N(t_{\pn,i},t_{\pn,i},\msf{x}_n(i)))\\
&&+\,N(t_{\pn,i},t_{\pn,i},\msf{x}_n(i))\integ{a}{b}{\!\!\!g(|t_{\pn,i}-t|)}{t}+y(t_{\pn,i}),
\end{eqnarray*}
where the unknowns correspond to the grid values of  $\varphi_{_{\!\!_n}\!}$:
$$
\msf{x}_n(i):=\varphi_{_{\!\!_n}\!}(t_{\pn,i})\mbox{ for all }i\in\ilbr 1\,,\,\pn\irbr.
$$
This system of order $\pn$ can be written as
\begin{eqnarray}\label{classical-LS}
\msf{F}_n(\msf{x}_n)&=&0,
\end{eqnarray}
where, for all $\msf{x}\in\Rpo$, and $i\in\ilbr 1\,,\,\pn\irbr$,
\begin{eqnarray*}
\msf{F}_n(\msf{x})(i)&\!\!:=\!\!&\msf{x}(i)-\sum\limits_{j=1}^{\pn}\!w_{\pn,j}\,g_{_{\delta_{_{n}}}}\!\!(|t_{\pn,i}\!-\!t_{\pn,j}|)((N(t_{\pn,i},t_{\pn,j},\msf{x}(j))-N(t_{\pn,i},t_{\pn,i},\msf{x}(i)))\\
&&-N(t_{\pn,i},t_{\pn,i},\msf{x}(i))\integ{a}{b}{\!\!\!g(|t_{\pn,i}-t|)}{t}-y(t_{\pn,i})\\
&=&\msf{x}(i)-\sum\limits_{j=1}^{\pn}\!w_{\pn,j}\,g_{_{\delta_{_{n}}}}\!\!(|t_{\pn,i}\!-\!t_{\pn,j}|)N(t_{\pn,i},t_{\pn,j},\msf{x}(j))\\
&&+N(t_{\pn,i},t_{\pn,i},\msf{x}(i))\Big(\sum\limits_{j=1}^{\pn}\!w_{\pn,j}\,g_{_{\delta_{_{n}}}}\!\!(|t_{\pn,i}\!-\!t_{\pn,j}|)-\!\!\integ{a}{b}{\!\!\!g(|t_{\pn,i}-t|)}{t}\Big)-y(t_{\pn,i}).
\end{eqnarray*}
System \eqref{classical-LS} must be solved accurately by some numerical method like, for instance, Gauss' method in the linear case, and Newton's method ---as described in the sequel--- in the nonlinear case.

\medskip

\noindent
The Jacobian matrix of $\msf{F}_n:\Rpo\to\Rpo$ at $\msf{x}\in\Rpo$ is given by
\begin{eqnarray*}
\msf{F}_n^{\,\prime}(\msf{x})(i,j)&=&\delta_{i,j}-w_{\pn,j}\,g_{_{\delta_{_{n}}}}\!\!(|t_{\pn,i}\!-\!t_{\pn,j}|)\dfrac{\partial N}{\partial u}(t_{\pn,i},t_{\pn,j},\msf{x}(j))\\
&&+\delta_{i,j}\dfrac{\partial N}{\partial u}(t_{\pn,i},t_{\pn,i},\msf{x}(i))\Big(\sum\limits_{\ell=1}^{\pn} w_{\pn,j}\,g_{_{\delta_{_{n}}}}\!\!(|t_{\pn,i}\!-\! t_{\pn,\ell}|)-\integ{a}{b}{\!\!\!g(|t_{\pn,i}-t|)}{t}\Big),
\end{eqnarray*}
where $\delta_{i,j}$ is the Kronecker delta, and $i,j\in\ilbr 1\,,\,\pn\irbr$.

\medskip

\noindent
The Newton's sequence $\big(\msf{x}_n^{[k]}\big)_{k\ge0}$ in $\Rpo$ is defined, for a given starting column $\msf{x}_n^{[0]}\in\Rpo$, by
\begin{eqnarray*}
\msf{F}_n^{\,\prime}(\msf{x}_n^{[k]})\,\msf{x}_n^{[k+1]}&=&\msf{F}_n^{\,\prime}(\msf{x}_n^{[k]})\,\msf{x}_n^{[k]}-\msf{F}_n(\msf{x}_n^{[k]})\mbox{ for all }k\ge0,
\end{eqnarray*}
where $\msf{x}_n^{[k+1]}$ is the unknown, i.e.
\begin{eqnarray*}
(\msf{I}_n-\msf{A}_n^{[k]}-\msf{B}_n^{[k]})\,\msf{x}_n^{[k+1]}&=&\msf{a}_n^{[k]},
\end{eqnarray*}
where $\msf{I}_n$ is the identity matrix of order $\pn$, and, for all $i,j\in\ilbr 1\,,\,\pn\irbr$,
\begin{eqnarray}\label{Ank-classical}
\msf{A}_n^{[k]}(i,j)&\!\!\!:=\!\!\!&w_{\pn,j}\,g_{_{\delta_{_{n}}}}\!\!(|t_{\pn,i}\!-\!t_{\pn,j}|)\dfrac{\partial N}{\partial u}(t_{\pn,i},t_{\pn,j},\msf{x}_n^{[k]}(j)),\\
\nonumber\\
\label{Dnk-classical}
\msf{B}_n^{[k]}(i,j)&\!\!\!:=\!\!\!&\delta_{i,j}\dfrac{\partial N}{\partial u}(t_{\pn,i},t_{\pn,i},\msf{x}_n^{[k]}(i))\Big(\integ{a}{b}{\!\!\!g(|t_{\pn,i}\!-\!t|)}{t}\!-\!\sum\limits_{\ell=1}^{\pn} w_{\pn,\ell}\,g_{_{\delta_{_{n}}}}\!\!(|t_{\pn,i}\!-\!t_{\pn,\ell}|)\Big),
\end{eqnarray}
and
\begin{eqnarray}\label{bnk-classical}
\msf{a}_n^{[k]}(i)&:=&y(t_{\pn,i})
\end{eqnarray}
\begin{eqnarray*}
\hspace*{-0.75 cm}
&&\!+\Big(N(t_{\pn,i},t_{\pn,i},\msf{x}_n^{[k]}(i))\!-\!\msf{x}_n^{[k]}(i)\dfrac{\partial N}{\partial u}(t_{\pn,i},t_{\pn,i},\msf{x}_n^{[k]}(i))\Big)\!\Big(\integ{a}{b}{\!\!\!g(|t_{\pn,i}\!-\!t|)}{t}\!-\!\!\sum\limits_{\ell=1}^{\pn}\!w_{\pn,\ell}g_{_{\delta_{_{n}}}}\!\!(|t_{\pn,i}\!-\!t_{\pn,\ell}|)\Big)\\
\hspace*{-0.75 cm} &&\!+\sum\limits_{j=1}^{\pn}\!w_{\pn,j}\,g_{_{\delta_{_{n}}}}\!\!(|t_{\pn,i}\!-\!t_{\pn,j}|)\Big(N(t_{\pn,i},t_{\pn,j},\msf{x}_n^{[k]}(j))\!-\!\dfrac{\partial N}{\partial u}(t_{\pn,i},t_{\pn,j},\msf{x}_n^{[k]}(j))\msf{x}_n^{[k]}(j)\Big).
\end{eqnarray*}
For $n$ fixed, and under suitable hypotheses on $\msf{F}_n$ and $\msf{x}_n^{[0]}$, the sequence $\big(\msf{x}_n^{[k]}\big)_{k\ge0}$ is quadratically convergent with limit $\msf{x}_n$, the column of values of $\varphi_{_{\!\!_n}\!}$ at the nodes $\big(t_{\pn,j}\big)_{j=1}^{\pn}$.

\begin{thm}[On the convergence of the Classical Approach]\quad{}\newline
\label{thm:classical}
The sequence $\big(\varphi_{_{\!\!_n}\!}\big)_{n\ge2}$ is convergent with limit $\varphi$.
\end{thm}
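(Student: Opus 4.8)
The plan is to realize $\varphi_{n}$ as the fixed point of a Newton-type contraction built from $\F_n$, and to bound $\|\varphi_{n}-\varphi\|$ by the consistency defect $\|\F_n(\varphi)\|$, which tends to $0$. First I would note that Lemma \ref{lem:Fn-conv} already supplies this defect at the exact solution: since $\F_n(\varphi)-\F(\varphi)=K(\varphi)-K_n(\varphi)$ and $\F(\varphi)=0$, the pointwise convergence $K_n\stackrel{\mathrm p}\to K$ gives $\|\F_n(\varphi)\|\to0$. The two further ingredients are the uniform invertibility of $I-T_n(\varphi)$ coming from Lemma \ref{lem:bdn}, and a quantitative equicontinuity of the derivatives $T_n$ near $\varphi$ that is \emph{uniform in} $n$.

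The main technical step, and the place where the constant $\hat{\gamma}$ of \eqref{H} genuinely enters, is to show that for every $\epsilon>0$ there is a radius $r>0$, independent of $n$, with $\|T_n(x)-T_n(\varphi)\|\le\epsilon$ whenever $\|x-\varphi\|\le r$. Writing out $T_n(x)-T_n(\varphi)$, each of its three pieces carries a factor $\frac{\partial N}{\partial u}(s,t,x(t))-\frac{\partial N}{\partial u}(s,t,\varphi(t))$ or its diagonal analogue, while the accompanying weighted sums $\sum_{j}w_{\pn,j}\,g_{_{\delta_{_n}}}(|s-t_{\pn,j}|)=(U_ne)(s)$ and the integral $(Ue)(s)$ are uniformly bounded in $s$ and $n$. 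That uniform bound is exactly what \eqref{H} provides: one splits the sum into its singular part near $s$, controlled by $\hat\gamma$ together with $\delta_{_n}g(\delta_{_n})\le\int_0^{\delta_{_n}}g\to0$, and its regular part, which is the Riemann-type quantity underlying $U_n\stackrel{\mathrm{cc}}\to U$. Since $\frac{\partial N}{\partial u}$ is uniformly continuous on the compact set $\lbr a\,,b\rbr\times\lbr a\,,b\rbr\times J$, where $J$ is a compact interval containing the ranges of all $x$ with $\|x-\varphi\|\le r$, the difference factor is uniformly small, which yields the claim.

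With these estimates I would fix the uniform bound $C\ge\|(I-T_n(\varphi))^{-1}\|$ from Lemma \ref{lem:bdn} and set $G_n(x):=x-(I-T_n(\varphi))^{-1}\F_n(x)$, whose fixed points are exactly the zeros of $\F_n$. Since $G_n'(x)=(I-T_n(\varphi))^{-1}(T_n(x)-T_n(\varphi))$, taking $\epsilon=1/(2C)$ in the equicontinuity step gives $\|G_n'(x)\|\le\tfrac12$ on $\overline{B}(\varphi,r)$, so $G_n$ is a contraction there. As $\|G_n(\varphi)-\varphi\|=\|(I-T_n(\varphi))^{-1}\F_n(\varphi)\|\le C\|\F_n(\varphi)\|\le r/2$ for $n$ large, $G_n$ maps $\overline{B}(\varphi,r)$ into itself; Banach's fixed point theorem then furnishes a unique fixed point there, which must be the solution $\varphi_{n}$ of \eqref{Approx-newapp} singled out near $\varphi$ by Lemma \ref{lem-loc-inv}.

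Finally the contraction estimate closes the argument: from $\varphi_{n}-\varphi=G_n(\varphi_{n})-G_n(\varphi)+G_n(\varphi)-\varphi$ and $\|G_n'\|\le\tfrac12$ one obtains $\|\varphi_{n}-\varphi\|\le2\|G_n(\varphi)-\varphi\|\le2C\|\F_n(\varphi)\|$, and the right-hand side tends to $0$ by the first step; hence $\varphi_{n}\to\varphi$. I expect the real difficulty to be concentrated in the second paragraph, namely obtaining an $n$-uniform modulus of continuity for $T_n$ even though the truncated kernels $g_{_{\delta_{_n}}}$ blow up on the diagonal, so that the bookkeeping of the singular contributions through \eqref{H} is the genuine content, the contraction and the concluding estimate being then routine.
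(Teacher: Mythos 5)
Your proposal is correct, and it reaches the conclusion by a genuinely different mechanism than the paper. The paper's proof also reduces everything to the consistency defect $\|\F_n(\varphi)\|=\|K_n(\varphi)-K(\varphi)\|\to0$, but it gets there by taking the existence of $\F_n^{-1}$ near $0$ from Lemma \ref{lem-loc-inv} and applying the integral Mean Value Theorem twice --- once to $\F_n^{-1}$ (whose derivative $(I-T_n(u_n(t)))^{-1}$ is uniformly bounded by Lemma \ref{lem:bdn}) and once to $\F_n$ --- which yields the \emph{two-sided} bound $\alpha_2\|\F_n(\varphi)\|\le\|\varphi_{_{\!\!_n}\!}-\varphi\|\le\beta_2\|\F_n(\varphi)\|$. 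You instead build $\varphi_{_{\!\!_n}\!}$ directly as the fixed point of the frozen-derivative Newton map $G_n(x)=x-(I-T_n(\varphi))^{-1}\F_n(x)$ on a ball of $n$-independent radius, the key input being the $n$-uniform equicontinuity of $x\mapsto T_n(x)$ near $\varphi$, which you correctly trace back to the uniform bound on $(U_ne)(s)$ supplied by \eqref{H} together with the uniform continuity of $\partial N/\partial u$ on a compact set. What your route buys: it makes explicit the $n$-uniformity of the neighborhood on which everything happens (a point the paper's proof leaves somewhat implicit when it asserts that $u_n(t)=\F_n^{-1}(v_n(t))$ stays in the region where $(I-T_n(\cdot))^{-1}$ is uniformly bounded), and it delivers existence and local uniqueness of $\varphi_{_{\!\!_n}\!}$ in a fixed ball as a byproduct rather than importing them from Lemma \ref{lem-loc-inv}. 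What it loses: you only obtain the upper bound $\|\varphi_{_{\!\!_n}\!}-\varphi\|\le2C\|\F_n(\varphi)\|$, whereas the paper's lower bound is what justifies the remark following the theorem that the rate of convergence of $\varphi_{_{\!\!_n}\!}$ to $\varphi$ is exactly that of $K_n(\varphi)$ to $K(\varphi)$; you would recover it cheaply from $\|\F_n(\varphi)\|=\|\F_n(\varphi)-\F_n(\varphi_{_{\!\!_n}\!})\|\le\sup_{x}\|I-T_n(x)\|\,\|\varphi_{_{\!\!_n}\!}-\varphi\|$, the supremum being finite on bounded sets as the paper notes.
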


\begin{proof}
Since $\F$ and $\F_n$ are invertible and Fréchet-differentiable, the derivative of their inverses at $0$ is equal to the inverse of the derivative of the direct operators at the inverse image of $0$, and the integral form of the Mean Value Theorem for Derivatives gives:
\begin{eqnarray*}
\varphi_{_{\!\!_n}\!}-\varphi&=&\F_n^{-1}(0)-\F^{-1}(0)=\F_n^{-1}(\F(\varphi))-\F_n^{-1}(\F_n(\varphi))\\
&=&\integ{0}{1}{(\F_n^{-1})^{\,\prime}(\F_n(\varphi)+t(\F(\varphi)-\F_n(\varphi))}{t}\:(\F(\varphi)-\F_n(\varphi)).
\end{eqnarray*}
Also,
\begin{eqnarray*}
&&\F_n(\varphi_{_{\!\!_n}\!})-\F_n(\varphi)=\integ{0}{1}{\F_n^{\,\prime}(\varphi+t(\varphi_{_{\!\!_n}\!}-\varphi))}{t}\;(\varphi_{_{\!\!_n}\!}-\varphi).
\end{eqnarray*}
Since the sequence $\big(\F_n\big)_{n\ge2}$ is pointwise convergent to $\F$ and $\F(\varphi)=0$, then
$$
v_n(t):=\F_n(\varphi)+t(\F(\varphi)-\F_n(\varphi))
$$
tends to $0$ uniformly in $t\in\lbr0\,,1\rbr$ as $n\to\infty$. On the other hand, 
$$
(\F_n^{-1})^{\,\prime}(v_n(t))=(I-T_n(u_n(t)))^{-1}
$$
is uniformly bounded for all $n$ large enough and $t\in\lbr0\,,1\rbr$, where 
$$
u_n(t):=\F_n^{-1}(v_n(t)).
$$
Also, $\F_n^{\,\prime}(x)=I-T_n(x)$ is bounded uniformly in $x$ for all $x$ in any bounded set of $\X$. Since $\F_n(\varphi_{_{\!\!_n}\!})=0$, there exist $n$-independent constants $\alpha_2>0$ and $\beta_2>0$ such that 
$$
\alpha_2\|\F_n(\varphi)\|\le\|\varphi_{_{\!\!_n}\!}-\varphi\|\le \beta_2\|\F_n(\varphi)\|.
$$
But $\F_n(\varphi)=K(\varphi)-K_n(\varphi)$, so
\begin{eqnarray*}
\alpha_2\|K_n(\varphi)-K(\varphi)\|\le\|\varphi_{_{\!\!_n}\!}-\varphi\|\le \beta_2\|K_n(\varphi)-K(\varphi)\|.
\end{eqnarray*}
This proves that the sequence $(\varphi_{_{\!\!_n}\!})_{n\ge2}$ is convergent with limit $\varphi$. 
\end{proof}
\noindent
The previous nested bound shows that the rate of convergence of $\big(\varphi_{_{\!\!_n}\!}\big)_{n\ge2}$ to $\varphi$ is the same as the rate of convergence of $\big(K_n(\varphi)\big)_{n\ge2}$ to $K(\varphi)$. In other words, the quality of the approximate solution $\varphi_{_{\!\!_n}\!}$ and the quality of the approximate operator $K_n$ at the exact solution are of the same order.

\section{A New Approach: Linearize First}\label{sec:newapp}

We can tackle the numerical resolution of the nonlinear problem following a New Approach: first linearize the problem in the infinite dimensional space $\X$ with the Newton-Kantorovich method, and then solve numerically at each iteration, the linear problem issued from this method using a discretization scheme.

\medskip

\noindent
The Newton-Kantorovich step number $k$ applied to produce a sequence $\big(\varphi^{[k]}\big)_{k\ge0}$ having as its limit a function $\varphi$ such that $\F(\varphi)=0$ can be written as the linear problem with unknown $\varphi^{[k+1]}$
$$
\F^{\,\prime}(\varphi^{[k]})\varphi^{[k+1]}=\F^{\,\prime}(\varphi^{[k]})\varphi^{[k]}-\F(\varphi^{[k]}),
$$
where $\varphi^{[0]}$ must be properly chosen by the user. Remark that $\varphi^{[k+1]}$ is given by
\begin{eqnarray}\label{NK}
\varphi^{[k+1]}=T(\varphi^{[k]})\varphi^{[k+1]}+z^{[k]}\mbox{ for all }k\ge0,
\end{eqnarray}
where
\begin{eqnarray*}
z^{[k]}:=K(\varphi^{[k]})-T(\varphi^{[k]})\varphi^{[k]}+y\mbox{ for all }k\ge0.
\end{eqnarray*}
In the sequel, $\O(\varphi\,,r)$ denotes the open ball of center $\varphi$ and radius $r$, and $\C(\varphi\,,r)$ denotes the closed ball of center $\varphi$ and radius $r$. 

\medskip

\noindent
Our approach leads to solve equation \eqref{NK} numerically at each step $k$ of the Newton-Kantorovich process. We shall thus build a sequence $\big(\varphi_{\!\!_n}^{[k]}\big)_{n\ge2}$ such that $\varphi_{\!\!_n}^{[k]}$ is a sufficiently good approximation to $\varphi^{[k]}$ for all $n$ large enough but fixed, and we expect that, for a such value of $n$,
$$
\lim\limits_{k\to+\infty}\varphi_{\!\!_n}^{[k]}=\varphi.
$$
$T(\varphi^{[k]}):\X\to\X$ is a weakly singular linear Fredholm integral operator, and it can be approximated with the singularity subtraction scheme involving the linear bounded operator $T_n(\varphi_{\!\!_n}^{[k]})$ given by:

\medskip

\noindent
For all $f\in\X$ and $s\in\lbr a\,,b\rbr$,
\begin{eqnarray}\label{Tnk}
(T_n(\varphi_{\!\!_n}^{[k]})f)(s)\!\!&\!\!\!=\!\!\!&\!\!\sum\limits_{j=1}^{\pn}\!w_{\pn,j}g_{_{\delta_{_{n}}}}\!\!(|s\!-\!t_{\pn,j}|)\dfrac{\partial N}{\partial u}(s,t_{\pn,j},\varphi_{\!\!_n}^{[k]}(t_{\pn,j}))(f(t_{\pn,j})\!-\!f(s))\\ 
&&\nonumber 
+f(s)\!\integ{a}{b}{\!\!\!\!g(|s\!-\!t|)\dfrac{\partial N}{\partial u}(s,t,\varphi_{\!\!_n}^{[k]}(t))}{t}.
\end{eqnarray}
Equation \eqref{NK} is replaced with the approximate equation
\begin{eqnarray}\label{Approx-NK-Eq}
\varphi_{\!\!_n}^{[k+1]}&=&T_n(\varphi_{\!\!_n}^{[k]})\varphi_{\!\!_n}^{[k+1]}+z_n^{[k]},
\end{eqnarray}
where
\begin{eqnarray*}
z_n^{[k]}:=K(\varphi_{\!\!_n}^{[k]})-T_n(\varphi_{\!\!_n}^{[k]})\varphi_{\!\!_n}^{[k]}+y.
\end{eqnarray*}
Evaluating \eqref{Approx-NK-Eq} at each node $t_{\pn,i}$, $i\in\ilbr1\,,\,\pn\irbr$, and defining 
$$
\msf{w}_n^{[k]}(i):=\varphi_{\!\!_n}^{[k]}(t_{\pn,i})\mbox{ for all } i\in\ilbr1\,,\,\pn\irbr,
$$
we get the system of linear equations of order $\pn$
\begin{eqnarray*}
(\msf{I}_n-\msf{C}_n^{[k]}-\msf{D}_n^{[k]})\msf{w}_n^{[k+1]}
&=&\msf{b}_n^{[k]},
\end{eqnarray*}
where $\msf{I}_n$ is the identity matrix of order $\pn$, and \eqref{Tnk} gives for all $i,j\in\ilbr1\,,\,\pn\irbr$,  
\begin{eqnarray}\label{Ank-newapp}
\msf{C}_n^{[k]}(i,j)\!&\!\!\!:=\!\!\!&w_{\pn,j}g_{_{\delta_{_{n}}}}\!\!(|t_{\pn,i}\!-\!t_{\pn,j}|)\dfrac{\partial N}{\partial u}(t_{\pn,i},t_{\pn,j},\varphi_{\!\!_n}^{[k]}(t_{\pn,j})),\\
\nonumber\\
\label{Dnk-newapp}
\msf{D}_n^{[k]}(i,j)\!&\!\!\!:=\!\!\!&\delta_{i,j}\Big(\integ{a}{b}{\!\!\!\!g(|t_{\pn,i}\!-\!t|)\dfrac{\partial N}{\partial u}(t_{\pn,i},t,\varphi_{\!\!_n}^{[k]}(t))}{t}-\sum\limits_{\ell=1}^{\pn}\msf{C}_n^{[k]}(i,\ell)\Big),\\
\nonumber\\
\label{bnk-newapp}
\msf{b}_n^{[k]}(i)\!&\!\!\!:=\!\!\!&K(\varphi_{\!\!_n}^{[k]})(t_{\pn,i})-(T_n(\varphi_{\!\!_n}^{[k]})\varphi_{\!\!_n}^{[k]})(t_{\pn,i})+y(t_{\pn,i})\\
\nonumber\\
\nonumber\!&\!\!\!\,\,=\!\!\!&y(t_{\pn,i})\!+\!\!\integ{a}{b}{\!\!\!g(|t_{\pn,i}\!-\!t|)\big( N(t_{\pn,i},t,\varphi_{\!\!_n}^{[k]}(t))\!-\!\varphi_{\!\!_n}^{[k]}(t_{\pn,i})\dfrac{\partial N}{\partial u}(t_{\pn,i},t,\varphi_{\!\!_n}^{[k]}(t))\big)}{t}\\
\nonumber\\
\nonumber
&&+\!\sum\limits_{j=1}^{\pn}\!w_{\pn,j}g_{_{\delta_{_{n}}}}\!\!(|t_{\pn,i}\!-\!t_{\pn,j}|)\dfrac{\partial N}{\partial u}(t_{\pn,i},t_{\pn,j},\varphi_{\!\!_n}^{[k]}(t_{\pn,j}))\big(\varphi_{\!\!_n}^{[k]}(t_{\pn,i})\!-\!\varphi_{\!\!_n}^{[k]}(t_{\pn,j})\big).
\end{eqnarray}
Once this system is solved, the coordinates of $\msf{w}_n^{[k+1]}$ allow equation \eqref{Approx-NK-Eq} to become a natural interpolation formula to recover $\varphi_{\!\!_n}^{[k+1]}$ as a function of $s\in\lbr a\,,b\rbr$ as follows: 

\medskip

\noindent
Since for all $s\in\lbr a\,,b\rbr$, 
$$
Q_n^{[k]}(s):=\sum\limits_{j=1}^{\pn}\!w_{\pn,j}g_{_{\delta_{_{n}}}}\!\!(|s\!-\!t_{\pn,j}|)\dfrac{\partial N}{\partial u}(s,t_{\pn,j},\varphi_{\!\!_n}^{[k]}(t_{\pn,j}))
$$
is an approximation of 
$$
I_{n}^{[k]}(s):=\integ{a}{b}{\!\!\!\!g(|s\!-\!t|)\dfrac{\partial N}{\partial u}(s,t,\varphi_{\!\!_n}^{[k]}(t))}{t},
$$
we may suppose that
$$
|I_{n}^{[k]}(s)-Q_n^{[k]}(s)|<\textstyle{\frac{1}{2}}.
$$
Recall that $\varphi_{\!\!_n}^{[k+1]}$ satisfies
\begin{eqnarray*}
\varphi_{\!\!_n}^{[k+1]}(s)&=&\sum\limits_{j=1}^{\pn}\!w_{\pn,j}g_{_{\delta_{_{n}}}}\!\!(|s\!-\!t_{\pn,j}|)\dfrac{\partial N}{\partial u}(s,t_{\pn,j},\varphi_{\!\!_n}^{[k]}(t_{\pn,j}))\msf{w}_n^{[k+1]}(j)+z_n^{[k]}(s)\\\\
&&+\;\varphi_{\!\!_n}^{[k+1]}(s)(I_{n}^{[k]}(s)-Q_n^{[k]}(s)).
\end{eqnarray*}
Hence
\begin{eqnarray*}
\varphi_{\!\!_n}^{[k+1]}(s)&\!\!=\!\!&\dfrac{\sum\limits_{j=1}^{\pn}\!w_{\pn,j}g_{_{\delta_{_{n}}}}\!\!(|s\!-\!t_{\pn,j}|)\dfrac{\partial N}{\partial u}(s,t_{\pn,j},\varphi_{\!\!_n}^{[k]}(t_{\pn,j}))\msf{w}_n^{[k+1]}(j)+z_n^{[k]}(s)}{1\!-\!I_{n}^{[k]}(s)+Q_n^{[k]}(s)},
\end{eqnarray*}
where the denominator satisfies
$$
|1-I_{n}^{[k]}(s)+Q_n^{[k]}(s)|>\textstyle{\frac{1}{2}}\mbox{ for all }s\in\lbr a\,,b\rbr, 
$$
so it never vanishes.

\medskip

\noindent
Comparing \eqref{Ank-classical} with \eqref{Ank-newapp}, \eqref{Dnk-classical} with \eqref{Dnk-newapp}, and \eqref{bnk-classical} with \eqref{bnk-newapp}, we see that, if it happens that $\msf{w}_n^{[k]}=\msf{x}_n^{[k]}$, then $\msf{C}_n^{[k]}=\msf{A}_n^{[k]}$, but even in such a case, neither $\msf{D}_n^{[k]}$ is necessarily equal to $\msf{B}_n^{[k]}$, nor $\msf{b}_n^{[k]}$ necessarily equal to $\msf{a}_n^{[k]}$. 

\medskip

\noindent
This means that the sequences $\big(\msf{x}_n^{[k]}\big)_{k\ge0}$ and $\big(\msf{w}_n^{[k]}\big)_{k\ge0}$ are not necessarily the same, even if the starting points are chosen to be equal: $\msf{x}_n^{[0]}=\msf{w}_n^{[0]}$. 

\medskip 

\noindent
Most probably, we are producing two different numerical approximations of some solution of equation \eqref{Equation}. 

\medskip

\noindent
For the sake of brevity, write:
\begin{eqnarray*}
R_n(x):=\F_n^{\,\prime}(x)^{-1}=(I-T_n(x))^{-1}
\end{eqnarray*}
whenever it exists. 

\begin{thm}[On the convergence of the New Approach] \label{thm:newapp}\quad{}\newline
Suppose that 
\begin{itemize}
\item[] \mbox{\rm(H1)} $I-T(\varphi)$ is bicontinuous,
\item[] \mbox{\rm(H2)} $T$ is Lipschitz-continuous in a neighborhood of $\varphi$,
\item[] \mbox{\rm(H3)} $T_n(x)\stackrel{\nu}\to T(x)$ for all $x$ close enough to $\varphi$.
\end{itemize}
Then, for all $n$ large enough, and $\varphi_{\!\!_{n}}^{[0]}$ close enough to $\varphi$,
$$
\lim\limits_{k\to+\infty}\varphi_{\!\!_{n}}^{[k]}=\varphi.
$$ 
\end{thm}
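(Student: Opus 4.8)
The plan is to read \eqref{Approx-NK-Eq} as a fixed-point iteration and to show that $\varphi$ is a locally attracting fixed point once $n$ is large. Solving \eqref{Approx-NK-Eq} for the new iterate gives $\varphi_{\!\!_n}^{[k+1]}=G_n(\varphi_{\!\!_n}^{[k]})$, where
$$G_n(x):=R_n(x)\big(K(x)-T_n(x)x+y\big)$$
is defined whenever $R_n(x)=(I-T_n(x))^{-1}$ exists; by (H1) and Lemma \ref{lem:bdn} this holds, with a bound uniform in $n$, for all $x$ in a fixed closed ball $\C(\varphi\,,r)$ as soon as $n$ is large. The first key observation is that $\varphi$ is an \emph{exact} fixed point of $G_n$: since $K(\varphi)+y=\varphi$, we get $K(\varphi)-T_n(\varphi)\varphi+y=(I-T_n(\varphi))\varphi$, hence $G_n(\varphi)=R_n(\varphi)(I-T_n(\varphi))\varphi=\varphi$. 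It therefore suffices to prove local convergence of $\varphi_{\!\!_n}^{[k+1]}=G_n(\varphi_{\!\!_n}^{[k]})$ to $\varphi$.

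Next I would linearize the error $e_k:=\varphi_{\!\!_n}^{[k]}-\varphi$ about $\varphi$. Subtracting the identity $(I-T_n(\varphi_{\!\!_n}^{[k]}))\varphi=K(\varphi)+y-T_n(\varphi_{\!\!_n}^{[k]})\varphi$ from \eqref{Approx-NK-Eq} taken at $x=\varphi_{\!\!_n}^{[k]}$, and applying the integral Mean Value Theorem together with $K^{\,\prime}=T$, yields
$$(I-T_n(\varphi_{\!\!_n}^{[k]}))\,e_{k+1}=\big(\bar T_k-T_n(\varphi_{\!\!_n}^{[k]})\big)\,e_k,\qquad \bar T_k:=\integ{0}{1}{T(\varphi+t\,e_k)}{t},$$
so $e_{k+1}=R_n(\varphi_{\!\!_n}^{[k]})(\bar T_k-T_n(\varphi_{\!\!_n}^{[k]}))e_k$. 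Using the uniform bound on $R_n$ from Lemma \ref{lem:bdn}, the Lipschitz hypothesis (H2) on $T$ (and the resulting Lipschitz continuity, uniform in $n$, of $x\mapsto T_n(x)$ and of $x\mapsto R_n(x)$ on $\C(\varphi\,,r)$), one may replace the operators evaluated at $\varphi_{\!\!_n}^{[k]}$ by their values at $\varphi$ at the cost of an $\O(\|e_k\|)$ perturbation in operator norm, giving
$$e_{k+1}=L_n\,e_k+\O(\|e_k\|^2),\qquad L_n:=R_n(\varphi)\big(T(\varphi)-T_n(\varphi)\big)=G_n^{\,\prime}(\varphi).$$

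The main obstacle is now apparent: $\|L_n\|$ does \emph{not} tend to $0$, because by Lemma \ref{lem:convs} and (H3) the convergence $T_n(\varphi)\to T(\varphi)$ holds only in the $\nu$-sense, not in norm, so a one-step contraction is unavailable. The resolution is to prove instead that $\|L_n^2\|\to0$. Writing $B_n:=T(\varphi)-T_n(\varphi)$ and $R_n:=R_n(\varphi)$, the resolvent identity $R_n=I+T_n(\varphi)R_n$ gives $B_nR_n=B_n+B_nT_n(\varphi)R_n$, whence
$$L_n^2=R_nB_nR_nB_n=R_nB_n^2+R_nB_nT_n(\varphi)R_nB_n.$$
Now $B_n^2=B_nT(\varphi)-B_nT_n(\varphi)$, and $\nu$-convergence supplies precisely $\|B_nT(\varphi)\|\to0$ and $\|B_nT_n(\varphi)\|\to0$; hence $\|B_n^2\|\to0$, and using the uniform bounds on $\|R_n\|$ and $\|B_n\|$ both summands vanish in norm, so $\|L_n^2\|\to0$.

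Finally I would close by a two-step contraction. Fix $n$ so large that $\|L_n^2\|\le\tfrac14$ and all the preceding uniform bounds hold; iterating the error recursion twice gives $\|e_{k+2}\|\le(\|L_n^2\|+C\|e_k\|)\|e_k\|$ with $C$ independent of $n$ and $k$, so $\|e_{k+2}\|\le\tfrac12\|e_k\|$ whenever $\|e_k\|$ is small. Choosing $\varphi_{\!\!_n}^{[0]}$ in a sufficiently small ball $\O(\varphi\,,r_0)$ (with $r_0$ fixed after $n$, small relative to the uniform bound on $\|L_n\|$) keeps every iterate in $\C(\varphi\,,r)$, so the even subsequence $(\|e_{2m}\|)_m$ decays geometrically to $0$; since $\|e_{2m+1}\|\le\|L_n\|\,\|e_{2m}\|+\O(\|e_{2m}\|^2)$ is dominated by $\|e_{2m}\|$, the odd subsequence also tends to $0$. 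Therefore $e_k\to0$, i.e. $\lim_{k\to+\infty}\varphi_{\!\!_n}^{[k]}=\varphi$. Equivalently, one may record $\rho(L_n)\le\|L_n^2\|^{1/2}<1$ and invoke Ostrowski's point-of-attraction theorem. The one delicate ingredient throughout is the estimate $\|L_n^2\|\to0$, which is exactly what $\nu$-convergence buys to compensate the absence of norm convergence of $T_n(\varphi)$ to $T(\varphi)$.
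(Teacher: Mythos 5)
Your proposal is correct and follows essentially the same route as the paper: the same error recursion $e_{k+1}=R_n(\varphi_{n}^{[k]})\big(\bar T_k-T_n(\varphi_{n}^{[k]})\big)e_k$, the same isolation of the dominant $n$-dependent term $R_n(\varphi)\big(T(\varphi)-T_n(\varphi)\big)$ (your $L_n$, the paper's $D_n$) with the lower-order pieces controlled by Lipschitz continuity and the second resolvent identity, and the same mechanism of a squared-operator (spectral radius) contraction. Your explicit derivation of $\|L_n^2\|\to0$ from the definition of $\nu$-convergence, which the paper merely asserts for $\|D_n^2\|$, and your two-step contraction in place of the paper's one-step spectral-radius claim are welcome refinements of the same argument rather than a different approach.
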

\begin{proof}
Let be $r_{_{\!0}}>0$ small enough so that
$$
\sup\{\,\|R_n(x)\|~:~x\in\C(\varphi\,,r_{_{\!0}})\,,n\ge2\,\}<+\infty,
$$
and $\partial N/\partial u$ is Lipschitz-continuous on $\C(\varphi\,,r_{_{\!0}})$. Then both $T$ and $T_n$ are  Lipschitz-continuous on $\C(\varphi\,,r_{_{\!0}})$.

\medskip

\noindent
Since $T_n(\varphi)\stackrel{\mathrm{p}}\to T(\varphi)$,
$$
\sup\limits_{n}\|T_n(\varphi)\|<+\infty.
$$
Now,
\begin{eqnarray*}
\varphi_{\!\!_{n}}^{[k+1]}\!-\!\varphi&\!\!=\!\!&\varphi_{\!\!_{n}}^{[k]}\!-\!\varphi\!-\!R_n(\varphi_{\!\!_{n}}^{[k]})\big(\F(\varphi_{\!\!_{n}}^{[k]})\!-\!\F(\varphi)\big)\\
&\!\!=\!\!&\varphi_{\!\!_{n}}^{[k]}\!-\!\varphi\!-\!R_n(\varphi_{\!\!_{n}}^{[k]})\integ{0}{1}{\!\!\!\F^{\,\prime}(\varphi\!+\!t(\varphi_{\!\!_{n}}^{[k]}\!-\!\varphi))}{t}\,(\varphi_{\!\!_{n}}^{[k]}\!-\!\varphi),
\end{eqnarray*} 
i.e.
\begin{eqnarray}\label{rho}
\varphi_{\!\!_{n}}^{[k+1]}\!-\!\varphi&\!\!=\!\!&(I-L_{n}^{[k]})(\varphi_{\!\!_{n}}^{[k]}\!-\!\varphi),
\end{eqnarray}	
where
$$
L_{n}^{[k]}:=R_n(\varphi_{\!\!_{n}}^{[k]})\integ{0}{1}{\!\!\!\F^{\,\prime}(\varphi\!+\!t(\varphi_{\!\!_{n}}^{[k]}\!-\!\varphi))}{t}.
$$
A sufficient condition for the sequence $\big(\varphi_{\!\!_{n}}^{[k]}\big)_{k\ge0}$ to be convergent with limit $\varphi$ is that the spectral radius of $I-L_{n}^{[k]}$ be uniformly bounded by some constant $\gamma<1$. 

\medskip

\noindent
This is indeed the case for all large enough integers $n$, as we prove it now. Remark that
\begin{eqnarray*}
L_{n}^{[k]}&\!\!=\!\!&R_n(\varphi_{\!\!_{n}}^{[k]})\integ{0}{1}{\!\!\!\big(I\!-\!T(\varphi\!+\!t(\varphi_{\!\!_{n}}^{[k]}\!-\!\varphi))\big)}{t}\\
&\!\!=\!\!&I\!+\!R_n(\varphi_{\!\!_{n}}^{[k]})\integ{0}{1}{\!\!\!\big(T_n(\varphi_{\!\!_{n}}^{[k]})-T(\varphi\!+\!t(\varphi_{\!\!_{n}}^{[k]}\!-\!\varphi))\big)}{t}.
\end{eqnarray*}
Hence
\begin{eqnarray*}
I-L_{n}^{[k]}&\!\!=\!\!&R_n(\varphi_{\!\!_{n}}^{[k]})\integ{0}{1}{\!\!\!\big(T(\varphi\!+\!t(\varphi_{\!\!_{n}}^{[k]}\!-\!\varphi))-T_n(\varphi_{\!\!_{n}}^{[k]})\big)}{t}\\\\
&\!\!=\!\!&A_{n}^{[k]}+B_{n}^{[k]}+C_{n}^{[k]}+D_{n},
\end{eqnarray*}
where
\begin{eqnarray*}
A_{n}^{[k]}&\!\!:=\!\!&R_n(\varphi_{\!\!_{n}}^{[k]})\integ{0}{1}{\!\!\!\big(T(\varphi\!+\!t(\varphi_{\!\!_{n}}^{[k]}\!-\!\varphi))-T(\varphi)\big)}{t},\\\\
B_{n}^{[k]}&\!\!:=\!\!&\big(R_n(\varphi_{\!\!_{n}}^{[k]})-R_n(\varphi)\big)(T(\varphi)-T_n(\varphi)),\\\\ 
C_{n}^{[k]}&\!\!:=\!\!&R_n(\varphi_{\!\!_{n}}^{[k]})(T_n(\varphi)-T_n(\varphi_{\!\!_{n}}^{[k]})),\\\\
D_{n}&\!\!:=\!\!&R_n(\varphi)(T(\varphi)-T_n(\varphi)).
\end{eqnarray*}
The Second Resolvent Identity:
$$
R_n(u)-R_n(v)=R_n(u)(T_n(u)-T_n(v))R_n(v)\mbox{ for all }u,v\in\X,
$$
and induction, lead to the following upper bounds. Assume we have chosen $\varphi_{\!\!_{n}}^{[0]}\in\C(\varphi\,,r_{_{\!0}})$, and that, for some integer $k$, $\varphi_{\!\!_{n}}^{[k]}\in\C(\varphi\,,r_{_{\!0}})$. Then, for all $n$ large enough but fixed, there exist constants $\mu_A>0$, $\mu_B>0$ and $\mu_C>0$ such that 
\begin{eqnarray*}
\|A_{n}^{[k]}\|&\!\!\le\!\!&\mu_A\|\varphi_{\!\!_{n}}^{[k]}-\varphi\|,\\
\|B_{n}^{[k]}\|&\!\!\le\!\!&\mu_B\|\varphi_{\!\!_{n}}^{[k]}-\varphi\|,\\
\|C_{n}^{[k]}\|&\!\!\le\!\!&\mu_C\|\varphi_{\!\!_{n}}^{[k]}-\varphi\|,
\end{eqnarray*}
and
$$
\|D_{n}^2\|<\textstyle{\frac{1}{12}},
$$
since
$$
\lim\limits_{n\to+\infty}\|D_n^2\|=0,
$$
and 
$$
\sup\limits_{n}\|D_n\|<+\infty.
$$
Hence, there are constants $c_1>0$, $c_2>0$ such that,
\begin{eqnarray*}
\|(I-L_{n}^{[k]})^2\|&\!\!\le\!\!&\textstyle{\frac{1}{12}}+c_1\,\|\varphi_{\!\!_{n}}^{[k]}-\varphi\|+c_2\,\|\varphi_{\!\!_{n}}^{[k]}-\varphi\|^2.
\end{eqnarray*}
Let be
$$
r_{_{\!1}}:=\min\big\{r_{_{\!0}}\,,\textstyle{\frac{1}{12\,c_1}}\,,\textstyle{\frac{1}{\sqrt{12\,c_2}}}\}.
$$
Assume that $\varphi_{\!\!_{n}}^{[0]}\in\C(\varphi\,,r_{_{\!1}})$, and that, for some integer $k$, $\varphi_{\!\!_{n}}^{[k]}\in\C(\varphi\,,r_{_{\!1}})$ too. Then
$$
\|(I-L_{n}^{[k]})^2\|\le\textstyle{\frac{1}{4}}.
$$
Let $\rho$ denote the spectral radius. Then 
$$
\rho(I-L_{n}^{[k]})=\inf\limits_{m\ge1}\|(I-L_{n}^{[k]})^m\|^{\mbox{\tiny$\frac{1}{m}$}}\le\|(I-L_{n}^{[k]})^2\|^{\mbox{\tiny$\frac{1}{2}$}}\le\textstyle{\frac{1}{2}}.
$$
By \eqref{rho}, we conclude the existence of $\gamma<1$ verifying
$$
\|\varphi_{\!\!_{n}}^{[k+1]}-\varphi\|\le\gamma\|\varphi_{\!\!_{n}}^{[k]}-\varphi\|<r_{_{\!1}}.
$$
This shows that $\varphi_{\!\!_{n}}^{[k+1]}\in\C(\varphi\,,r_{_{\!1}})$.
Finally,
$$
\|\varphi_{\!\!_{n}}^{[k]}-\varphi\|\le\gamma^{\,k}\|\varphi_{\!\!_{n}}^{[0]}-\varphi\|\mbox{ for all }k\ge0.
$$
Hence the sequence $\big(\varphi_{\!\!_{n}}^{[k]}\big)_{k\ge0}$ is convergent with limit $\varphi$.
\end{proof}

\section{Numerical Examples}\label{sec:numerics}  

Numerical experiments were performed with MATLAB\textsuperscript{\textregistered} version 9.8 and Octave version 6.4.0.

\medskip

\noindent
Two examples of nonlinear weakly singular integral operators will  be shown, illustrating the behavior of the methods described in this paper. They differ in the nonlinear factor $N$ and in the weakly singular kernel $g$. 

\medskip

\noindent
In the first example, integrals that should be computed analytically must be approximated because one has no access to a primitive in closed form. Hence, they will be approximated by some numerical quadrature formula with $P_{n}$ nodes, specially conceived for the computation of a weakly singular integral $\integ{a}{b}{\!\!f(t)}{t}$:
\begin{eqnarray*}
\I_{n}(f):=\sum\limits_{\ell=1}^{P_{n}}\rho_{_{P_{n},\ell}}f_{_{\!\mu_{_{n}}}}(\tau_{_{P_{n},\ell}}),
\end{eqnarray*}
where $f_{_{\!\mu_{_{n}}}}$ is a continuous approximation of $f$ defined by $\mu_n$-truncation. 

\medskip

\noindent
Formulas $\I_{n}$ and $\Q_{_{n}\!}$ need not belong to the same family, and grids $\big(t_{\pn,j}\big)_{j=1}^{\pn}$ and  $\big(\tau_{_{P_{n},\ell}}\big)_{\ell=1}^{P_{n}}$ need not be nested.

\medskip

\noindent
$\I_{n}(f)$ must be a significantly better approximation of $\integ{a}{b}{\!\!\!f(t)}{t}$ than $\Q_{_{n}\!}(f)$ since it will be used to compute integrals that should be evaluated exactly. 

\medskip

\noindent
In the second example, the problem is reset in an invariant one-dimensional subspace and the integrals involved in computations are known exactly.

\medskip

\noindent
We recall that in the case of a linear bounded bicontinuous operator $L:\X\to\X$, the problem 
$$
\mbox{For }y\in\X,\mbox{ find }\varphi\in\X\mbox{ such that } L\varphi-y=0,
$$
has a condition number $\kappa$ defined by 
$$
\kappa:=\|L\|\|L^{-1}\|.
$$
If $y\neq0$ then $\varphi\neq0$, and given an approximation $\widehat{\varphi}$ of $\varphi$, its relative error and its relative residual are defined by
$$
e:=\dfrac{\|\widehat{\varphi}-\varphi\|}{\|\varphi\|},\quad
r:=\dfrac{\|L\widehat{\varphi}-y\|}{\|y\|},
$$
respectively. Moreover, $\kappa$, $e$ and $r$ satisfy the inequality
\begin{eqnarray}\label{cond-num-ineq}
\kappa\ge\max\{e/r\,,r/e\}.
\end{eqnarray}
Using the Mean Value Theorem for Derivatives, and the Inverse Function Theorem, the condition number for the nonlinear problem (\ref{Equation}) in a vicinity $\V$ of an exact solution $\varphi\neq0$ appears to be
\begin{eqnarray*}
\kappa_{_\V}(\F)&\!\!\!:=\!\!\!&\sup\limits_{x\in\V}{||\F^{\,\prime}(x)||}\,\sup\limits_{x\in\V}{|| \F^{\,\prime}(x)^{-1}||}.
\end{eqnarray*}
Moreover, in order to keep the inequality (\ref{cond-num-ineq}), the relative residual of an approximate solution $\widehat{\varphi}$ must be defined by
\begin{eqnarray*}
r&\!\!\!:=\!\!\!&\dfrac{\|\F(\widehat{\varphi})\|}{\|\F(0)\|}.
\end{eqnarray*}
For the grids considered in this paper, $\mathbf{e}$ and $\mathbf{r}$ denote the grid-valued relative error and the grid-valued relative residual, respectively. The bound \eqref{cond-num-ineq} is the reason why the ratios $\mathbf{e}/\mathbf{r}$ and $\mathbf{r}/\mathbf{e}$ are shown in the tables of numerical results.

\medskip

\noindent
In both examples:
\begin{itemize}
\item $a:=0$ and $b:=1$.
\item The exact solution is a constant function $\varphi:=c$.
\item The initial point for iterations is the null function.
\item Tables and figures show the convergence process up to the fifth iteration.
\end{itemize}

\subsection{Example 1}\label{subsec:1}

Problem \eqref{Equation} is solved with the Hammerstein operator $K$ defined with
$$
N(s,t,u):=\dfrac{\cos 2 \pi u}{1+s+t+u^4}\mbox{ for all }(s,t,u)\in\lbr0\,,1\rbr {\times}\lbr0\,,1\rbr {\times}\RR,
$$
and with the weakly singular decreasing function $g$ defined by 
$$
g(r):=\dfrac{1}{2\sqrt{r}}\mbox{ for all }r\in\rbr 0\,,1\rbr.
$$
The exact solution $\varphi$ is chosen so that we can assess the quality of the computed approximations. Here it is chosen as a constant 
$$
\varphi(s)= 7\mbox{ for all }s\in\lbr0\,,1\rbr,
$$
yielding a function $y$ that must be approximated numerically, say by truncation followed by the fine numerical quadrature $\I_{n}$.

\medskip

\noindent
The numerical choices for truncation with the New Approach are
$$
\begin{array}{rcl}
\delta_{_{n}}&\!\!:=\!\!&0.00002,\\\\
g_{_{\delta_{_{n}}}}\!(r)&\!\!:=\!\!&\left\{
\begin{array}{ll}
g(\delta_{_{n}})&\mbox{ for all }r\in\lbr0\,,\delta_{_{n}}\rbr,\\
g(r)&\mbox{ for all }r\in\lbr\delta_{_{n}}\,,1\rbr.
\end{array}\right.
\end{array}
$$
The numerical parameters for quadrature with the New Approach are 
\begin{eqnarray*}
n &\!\!:=\!\!& 51,\\
\pn&\!\!:=\!\!& 50,\\
t_{\pn,j}&\!\!:=\!\!&\dfrac{j-1}{50}\mbox{ for all }j\in\ilbr1\,,50\irbr,\\
w_{\pn,j}&\!\!:=\!\!&\dfrac{1}{50},\\
\Q_n&\!\!:=\!\!& \mbox{ Midpoint rectangles}.
\end{eqnarray*}
The integrals to be computed very accurately are approximated with the following parameters
$$
P_{n}:=500,\;\I_{n}:= \Q_{501},\;\mu_n:=0.000002,\;\rho_{_{P_{n},\ell}}=\dfrac{1}{500},\;\tau_{\ell}=\dfrac{\ell-0.5}{500}\mbox{ for all }\ell\in\ilbr1\,,500\irbr.
$$
The relative error and the relative residual with the Classical Approach and $\pn=200$ are shown in Table $\ref{t1}$. 

\medskip

\noindent
The relative error and the relative residual with the New Approach and $\pn=50$ are shown in Table $\ref{t2}$.

\medskip

\noindent
We remark the superiority of the New Approach. Nevertheless, the New Approach cannot keep its superlinear convergence after the precision of the fine quadrature used for the evaluations of the function $y$ is attained, and for which the truncation parameter is $\mu_n:=2{\times}10^{-6}$.

\medskip

\noindent
 The singularity of $g$ in Example 1 is stronger than that of $g$ in Example 2. Still the New Approach converges and is more efficient than the Classical Approach. 
\begin{center}
\begin{table}[ht!]
\centering
\begin{tabular}{|c|c|c|c|c|c|c|}
\hline
	&                   &                       &                             &                   &                   &          \\
$k$ &$\mathbf{r}$       & $\log_{10}\mathbf{r}$ &${\Delta\log_{10}\mathbf{r}}$&$\mathbf{e}$       &${\mathbf{e}/\mathbf{r}}$ & ${\mathbf{r}/\mathbf{e}}$   \\
	&                   &                       &                             &                   &                   &          \\
\hline
	&                   &                       &                             &                   &                   &        \\  
0   & $1\times 10^0$ \; &\;0.0                  &                             & $1\times 10^0$    & 1.0               & 1.0    \\
	&                   &                       & -0.3                        &                   &                   &        \\
1   & $5\times 10^{-1}$ & -0.3                  &                             & $4\times 10^{-1}$ & 1.0               & 1.0    \\  
	&                   &                       & -1.3                        &                   &                   &        \\
2   & $2\times 10^{-2}$ & -1.6                  &                             & $2\times 10^{-2}$ & 1.0               & 1.0    \\
	&                   &                       & -2.3                        &                   &                   &        \\
3   & $1\times 10^{-4}$ & -3.9                  &                             & $3\times 10^{-4}$ & 3.0               & 0.3    \\
	&                   &                       & -0.3                        &                   &                   &        \\
4   & $7\times 10^{-5}$ & -4.2                  &                             & $3\times 10^{-4}$ & 4.0               & 0.3    \\
	&                   &                       &\,0.0                        &                   &                   &        \\
5   & $7\times 10^{-5}$ & -4.2                  &                             & $3\times 10^{-4}$ & 4.0               & 0.3    \\
    &                   &                       &                             &                   &                   &        \\	
\hline
\end{tabular}
\caption{Convergence results for Example $1$ with the Classical Approach and $\pn=200$}

\medskip

\label{t1} 
\end{table}

\begin{table}[ht!]
\centering
\begin{tabular}{|c|c|c|c|c|c|c|}
\hline
	&                   &                       &                             &                   &                   &          \\
$k$ &$\mathbf{r}$       & $\log_{10}\mathbf{r}$ &${\Delta\log_{10}\mathbf{r}}$&$\mathbf{e}$       &${\mathbf{e}/\mathbf{r}}$ & ${\mathbf{r}/\mathbf{e}}$   \\
	&                   &                       &                             &                   &                   &          \\
\hline
	&                   &                       &                             &                   &                   &        \\  
0   & $1\times 10^0$ \; &\;0.0                  &                             & $1\times 10^0$    & 1.0               & 1.0    \\
	&                   &                       & -0.6                        &                   &                   &        \\
1   & $2\times 10^{-1}$ & -0.6                  &                             & $2\times 10^{-1}$ & 0.9               & 1.1    \\  
	&                   &                       & -2.3                        &                   &                   &        \\
2   & $1\times 10^{-3}$ & -2.9                  &                             & $1\times 10^{-3}$ & 1.1               & 0.9    \\
	&                   &                       & -4.1                        &                   &                   &        \\
3   & $1\times 10^{-7}$ & -7.0                  &                             & $2\times 10^{-4}$ & 2492              & 0.0004 \\
    &                   &                       & -1.0                        &                   &                   &        \\
4   & $3\times 10^{-8}$ & -8.0                  &                             & $2\times 10^{-4}$ & 8769              & 0.0001 \\
    &                   &                       &\,0.0                        &                   &                   &        \\
5   & $3\times 10^{-8}$ & -8.0                  &                             & $2\times 10^{-4}$ & 8769              & 0.0001 \\
    &                   &                       &                             &                   &                   &        \\
\hline
\end{tabular} 
\caption{Convergence results for Example $1$ with the New Approach and $\pn=50$}
\label{t2} 

\medskip

\end{table}
\end{center}
The results of the Classical Approach and the New Approach in terms of the evolution of the relative residual are compared in Fig. \ref{fig1}.
\begin{center}
\begin{figure}[ht!]
\centering
\begin{minipage}{0.45\linewidth}
\centering			
\includegraphics[height=6. cm,width=6. cm]{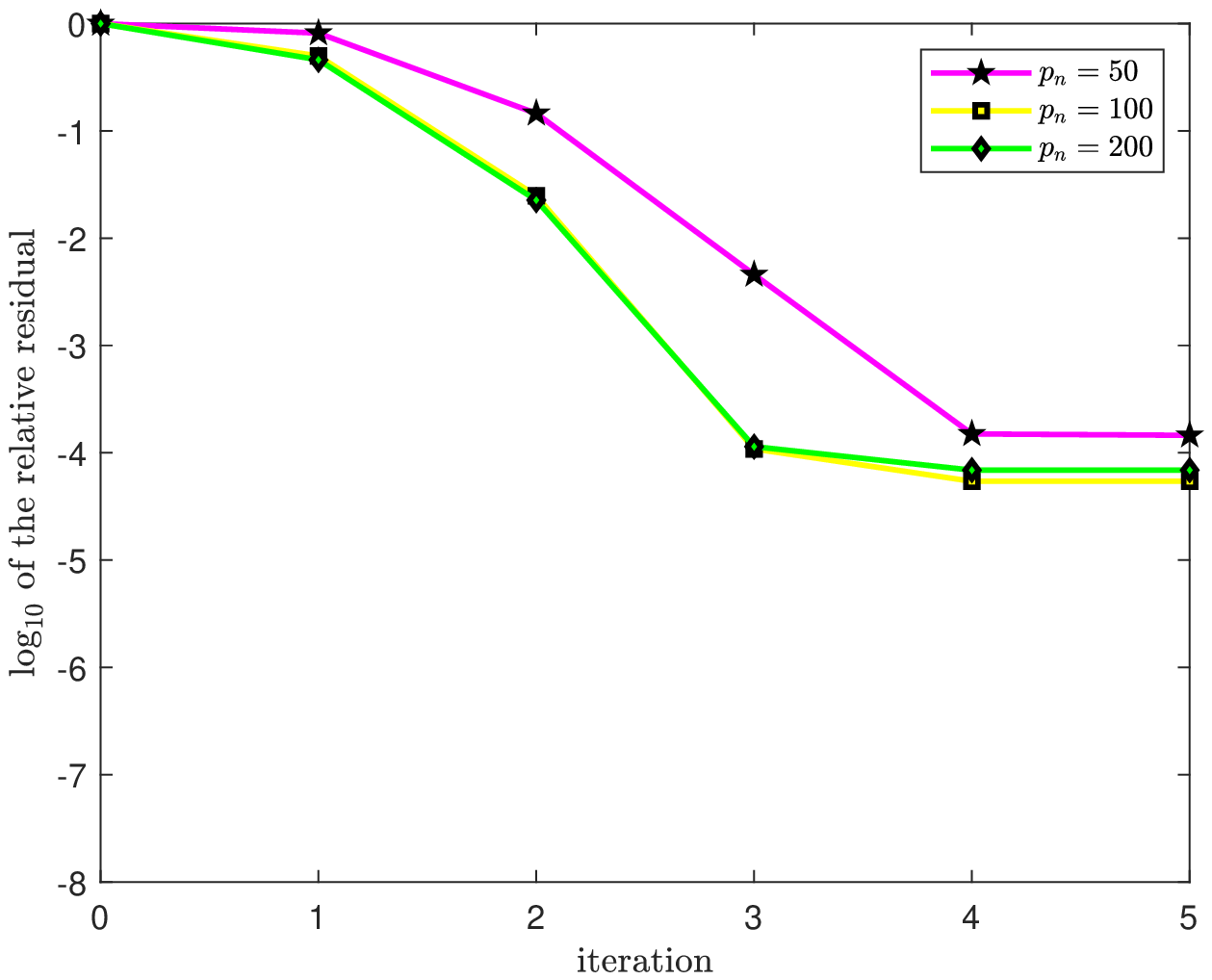}
\centerline{Classical Approach}

\centerline{Discretizing First: $\pn=50\,,100\,,200$}					
\end{minipage}
\begin{minipage}{0.45\linewidth}
\centering	
\includegraphics[height=6. cm,width=6. cm]{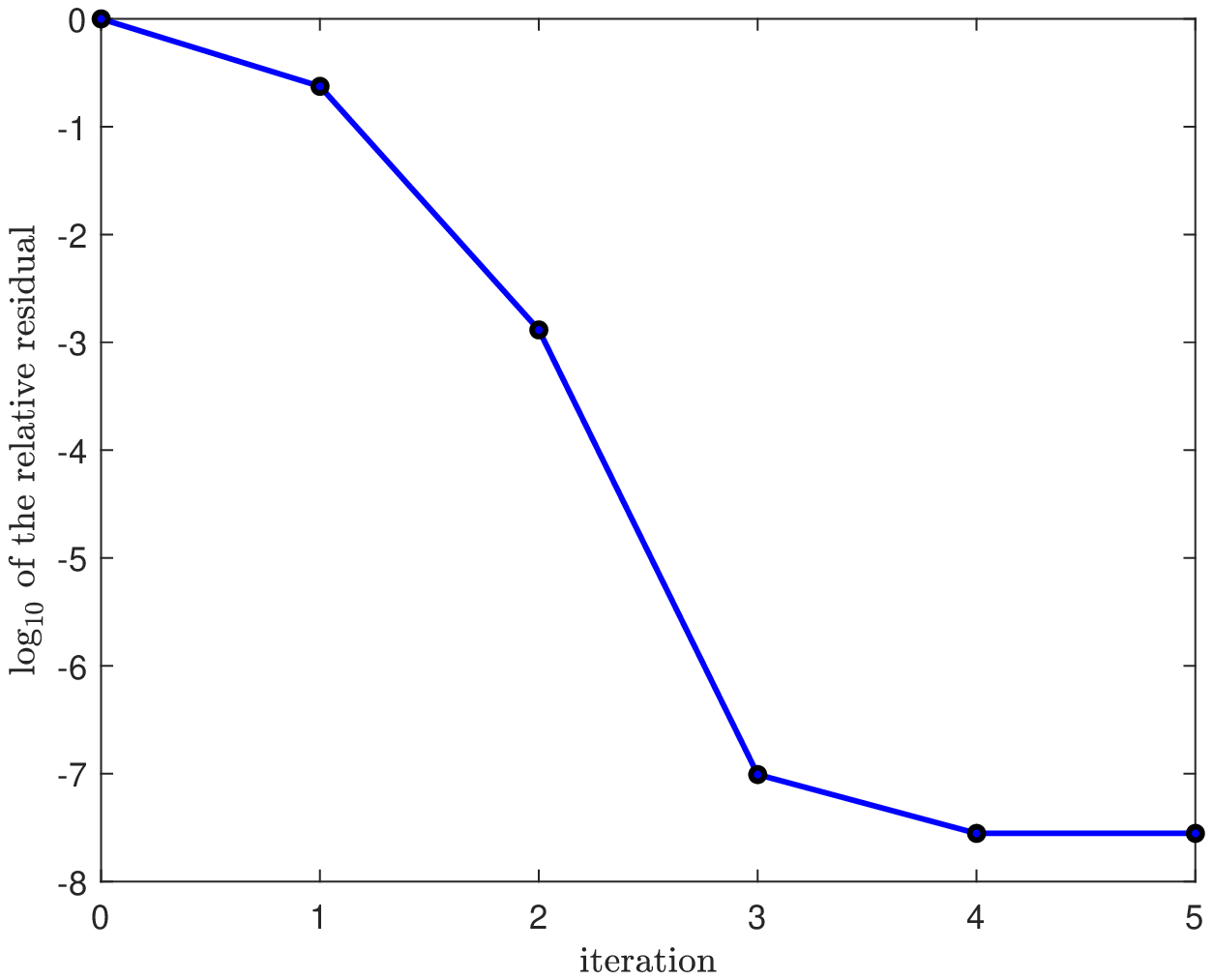}
\centerline{New Approach}

\centerline{Linearizing First: $\pn=50$}			
\end{minipage}
\caption{$\log_{10}$ of the relative residual per iteration in Example 1}
\label{fig1}		
\end{figure}
\end{center}

\subsection{Example 2}\label{subsec:2}

Problem \eqref{Equation} is solved with the Hammerstein operator $K$ defined with 
$$
N(s,t,u):=\dfrac{u}{\log2}+u^3\mbox{ for all }u\in\RR,
$$
and with the weakly singular function $g$ defined by 
$$
g(r):=\log2-\log(1-\cos 2\pi r)\mbox{ for all }r\in\rbr0\,,1\lbr.
$$
The subspace of $\X$ formed by constant functions is invariant under $K$. $K$ is expansive on it. Successive approximations diverge. The New Approach converges at least superlinearly, as shown in Table $\ref{t4}$.

\medskip

\noindent
The constant solution 
$$
\varphi:=-0.5
$$
is associated with the constant function
$$
y:=0.5+0.25\log2.
$$
The numerical choices for truncation with the New Approach are
$$
\begin{array}{rcl}
\delta_{_{n}}&\!\!:=\!\!&0.000001,\\\\
g_{_{\delta_{_{n}}}}\!(r)&\!\!:=\!\!&\left\{
\begin{array}{ll}
g(\delta_{_{n}})&\mbox{ for all }r\in\lbr0\,,\delta_{_{n}}\rbr,\\
g(r)&\mbox{ for all }r\in\rbr\delta_{_{n}}\,,1\!-\!\delta_{_{n}}\lbr,\\
g(\delta_{_{n}})&\mbox{ for all }r\in\lbr1\!-\!\delta_{_{n}}\,,1\rbr.
\end{array}\right.
\end{array}
$$
The numerical parameters for quadrature with the New Approach are 
\begin{eqnarray*}
n&\!\!:=\!\!&101,\\
\pn&\!\!:=\!\!&100,\\
t_{\pn,j} &\!\!:=\!\!&\dfrac{j-1}{100}\mbox{ for all }j\in\ilbr1\,,100\irbr,\\
w_{\pn,j}&\!\!:=\!\!&\dfrac{1}{100},\\
\Q_n&\!\!:=\!\!&\mbox{ Midpoint rectangles}.
\end{eqnarray*}
The integrals to be computed exactly are known analytically since 
$$
\integ{0}{1}{\!\!\!g(|s-t|)}{t}= 2\log2\mbox{ for all }s\in\lbr0\,,1\rbr.
$$
The relative error and the relative residual with the Classical Approach and $\pn=1000$ are shown in Table $\ref{t3}$. 

\medskip

\noindent
The relative error and the relative residual with the New Approach and $\pn=100$ are shown in Table $\ref{t4}$.
\begin{center}
\begin{table}[ht!]
\centering
\begin{tabular}{|c|c|c|c|c|c|c|}
\hline
	&                  &                       &                             &                 &                          &          \\
$k$ &$\mathbf{r}$      & $\log_{10}\mathbf{r}$ &${\Delta\log_{10}\mathbf{r}}$&$\mathbf{e}$     &${\mathbf{e}/\mathbf{r}}$ & ${\mathbf{r}/\mathbf{e}}$   \\
	&                  &                       &                             &                  &                         &          \\
\hline
	&                  &                       &                             &                  &                         &          \\  
0   &$1 \times 10^0$   &\;0.0                  &                             &$1\times 10^0$    & 1.0                     & 1.0\\
	&                  &                       & -0.2                        &                  &                         &          \\
1   &$6\times 10^{-1}$ & -0.2                  &                             &$3\times 10^{-1}$ & 0.5                     & 2.0\\  
	&                  &                       & -0.8                        &                  &                         &          \\
2   &$9\times 10^{-2}$ & -1.0                  &                             &$5\times 10^{-2}$ & 0.6                     & 1.7\\
	&                  &                       & -1.4                        &                  &                         &          \\
3   &$4\times 10^{-3}$ & -2.4                  &                             &$3\times 10^{-3}$ & 0.8                     & 1.3\\
	&                  &                       & -0.3                        &                  &                         &          \\
4   &$2\times 10^{-3}$ & -2.7                  &                             &$1\times 10^{-3}$ & 0.5                     & 2.0\\
	&                  &                       &\,0.0                        &                  &                         &          \\	
5   &$2\times 10^{-3}$ & -2.7                  &                             &$1\times 10^{-3}$ & 0.5                     & 2.0\\
	&                  &                       &                             &                  &                         &    \\	
\hline
\end{tabular}
\caption{Convergence results for Example $2$ with the Classical Approach and $\pn=1000$}
\label{t3} 

\medskip

\end{table}

\begin{table}[ht!]
\centering
\begin{tabular}{|c|c|c|c|c|c|c|}
\hline
	&                  &                       &                             &                 &                          &          \\
$k$ &$\mathbf{r}$      & $\log_{10}\mathbf{r}$ &${\Delta\log_{10}\mathbf{r}}$&$\mathbf{e}$     &${\mathbf{e}/\mathbf{r}}$ & ${\mathbf{r}/\mathbf{e}}$   \\
	&                  &                       &                             &                  &                         &          \\
\hline
	&                  &                       &                             &                  &                         &          \\  
0   &$1 \times 10^0$   &\;0.0                  &                             &$1\times 10^0$    & 1.0                     & 1.0\\
	&                  &                       & -0.2                        &                  &                         &          \\
1   &$6\times 10^{-1}$ & -0.2                  &                             &$3\times 10^{-1}$ & 0.5                     & 2.0\\  
	&                  &                       & -0.9                        &                  &                         &          \\
2   &$8\times 10^{-2}$ & -1.1                  &                             &$5\times 10^{-2}$ & 0.6                     & 1.7\\
	&                  &                       & -1.6                        &                  &                         &          \\
3   &$2\times 10^{-3}$ & -2.7                  &                             &$1\times 10^{-3}$ & 0.5                     & 2.0\\
	&                  &                       & -3.3                        &                  &                         &          \\
4   &$2\times 10^{-6}$ & -6.0                  &                             &$1\times 10^{-6}$ & 0.5                     & 2.0\\
	&                  &                       & -6.0                        &                  &                         &          \\	
5   &$9\times 10^{-13}$&-12.0                  &                             &$6\times 10^{-13}$& 0.7                     & 1.4\\
	&                  &                       &                             &                  &                         &    \\	
\hline
\end{tabular}
\caption{Convergence results for Example $2$ with the New Approach and $\pn=100$}
\label{t4} 

\medskip

\end{table}
\end{center}
The results of the Classical Approach and the New Approach in terms of the evolution of the relative residual are compared in Fig. \ref{fig2}.
\begin{center}
\begin{figure}[ht!]
\centering
\begin{minipage}{0.45\linewidth}
\centering			
\includegraphics[height=6. cm,width=6. cm]{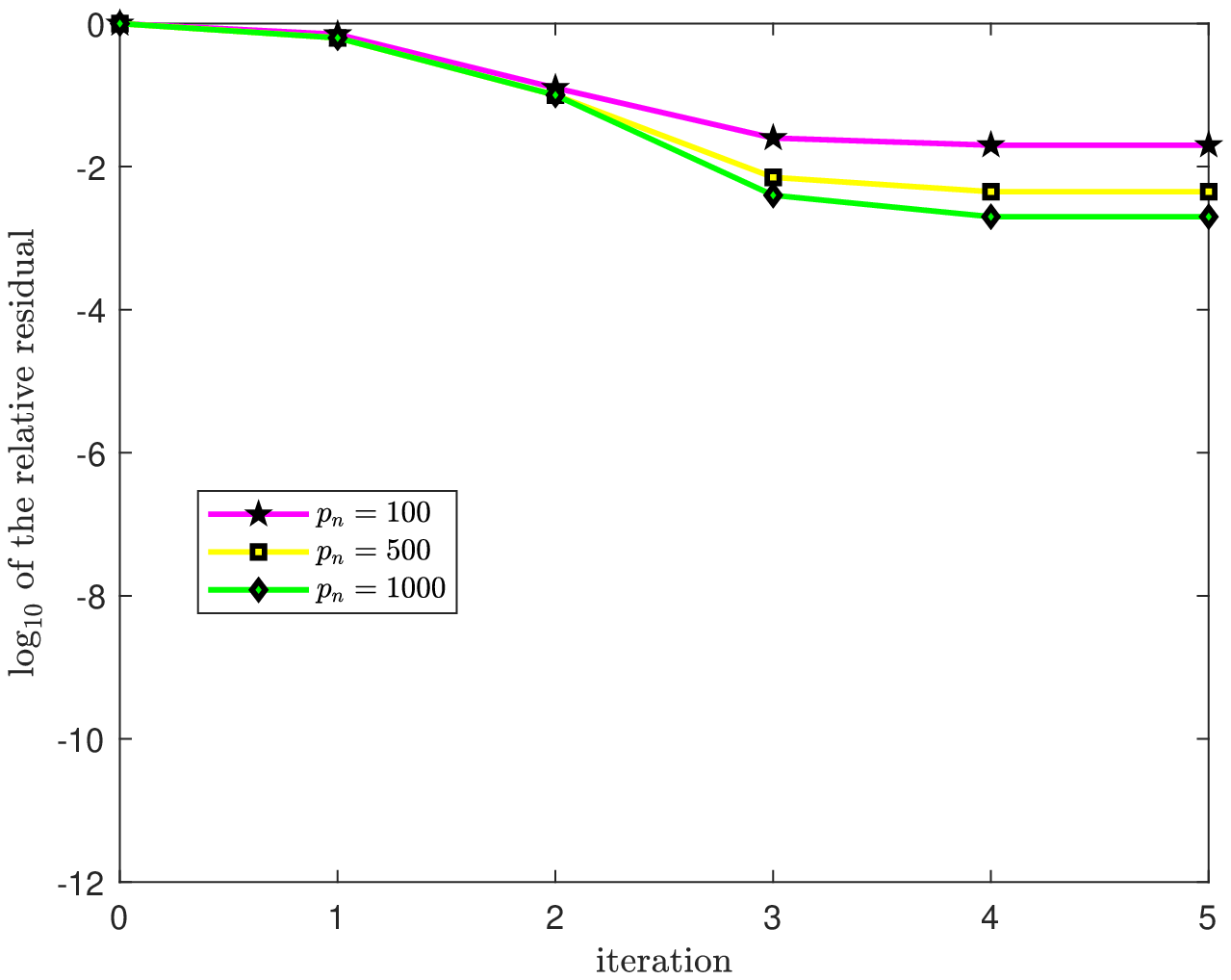}
\centerline{Classical Approach}

\centerline{Discretizing First: $\pn=100\,,500,1000$}
\end{minipage}
\begin{minipage}{0.45\linewidth}
\centering	
\includegraphics[height=6. cm,width=6. cm]{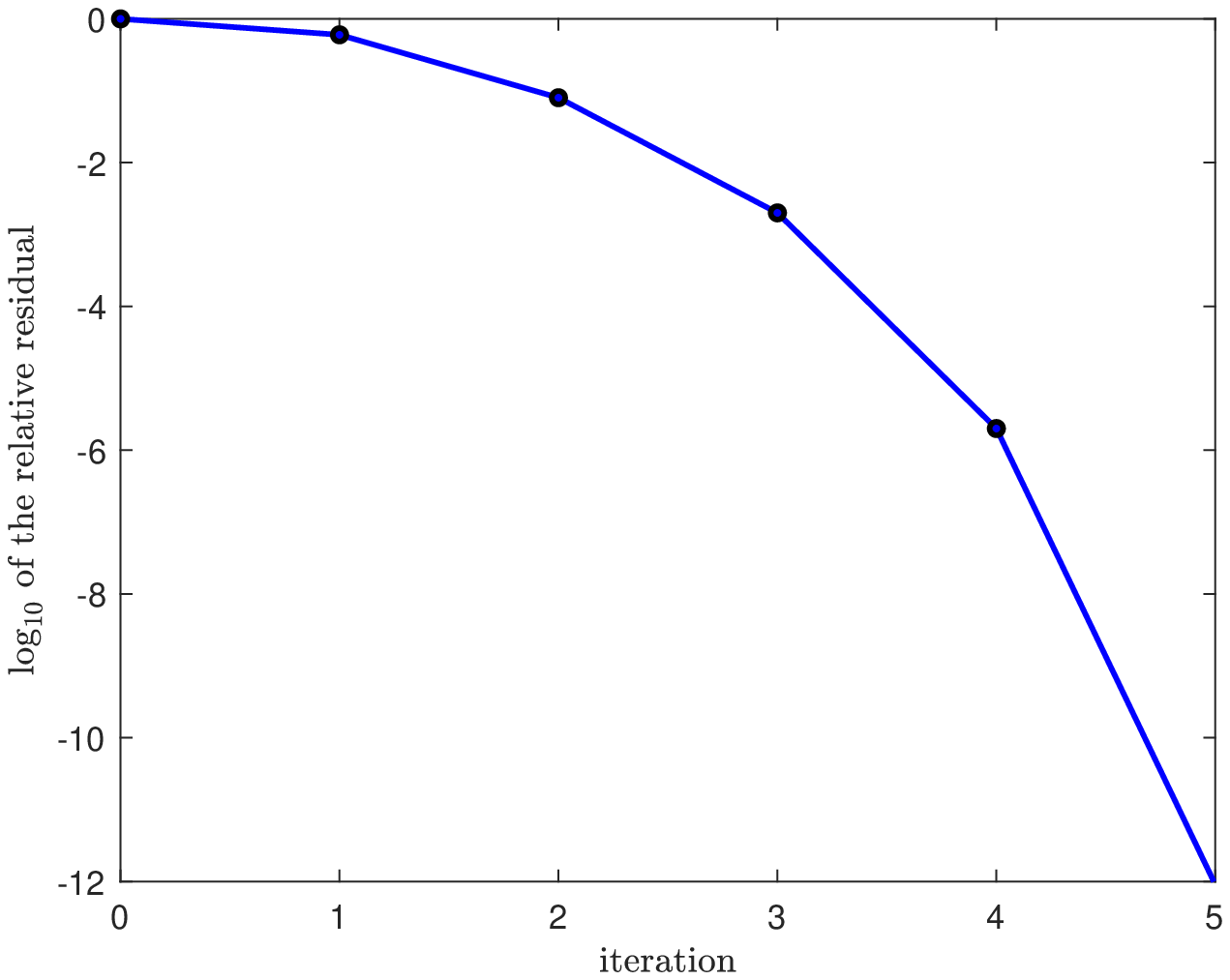}
\centerline{New Approach}

\centerline{Linearizing First: $\pn=100$}
\end{minipage}
\caption{$\log_{10}$ of the relative residual per iteration in Example 2}
\label{fig2}
\vspace*{6.5 mm}
\end{figure}
\end{center}

\section{Final Comments and Conclusions}\label{sec:commconcl}

The classic textbook \cite{Ch1983} by Françoise Chatelin (1941 -- 2020), first published by Academic Press in 1983, provides a unified treatment of linear integral equations of the second kind, and spectral approximation for Fredholm linear integral operators. Despite significant changes and advances in the field since it was first published, the book continues to form the theoretical bedrock for any computational approach to integral equations and spectral theory. Almost all the papers of the authors of this article have been inspired by Chatelin's research and academic activity during the 80's. 

\medskip

\noindent
In this work, we have extended to nonlinear integral operators, the singularity subtraction technique presented in \cite{An1981} for approching  linear weakly singular integral operators in the framework of real valued continuous functions. The singularity subtraction technique cannot be settled in Lebesgue spaces.

\medskip

\noindent
In the Classical Approach, $\varphi$ is approximated by a sequence of functions $\big(\varphi_{_{\!\!_n}}\big)_{n\ge2}$. For a fixed $n$, $\varphi_{_{\!\!_n}}$ is approximated only at the nodes of the grid, with the help of the Newton-Kantorovich method in the $\pn$-dimensional real space $\RR^{\pn{\times}1}$. This method builds the sequence $\big(\msf{x}_n^{[k]}\big)_{k\ge0}$. This sequence approximates the grid values of $\varphi_{_{\!\!_n}}$: $\msf{x}_n^{[k]}(i)=\varphi_{_{\!\!_n}}(t_{\pn,i})$ + \lbr N-K $k$-step error\rbr.

\medskip

\noindent
In the New Approach, the Newton-Kantorovich method is applied in the infinite dimensional space $\X$, the first sequence to appear is $\big(\varphi^{[k]}\big)_{k\ge0}$. Since it cannot be computed exactly, the singularity subtraction approximation is used and a new sequence appears: $\big(\varphi_{\!\!_n}^{[k]}\big)_{k\ge0}$. To compute $\varphi_{\!\!_n}^{[k]}$, a linear system is solved for $\msf{w}_n^{[k]}\in\RR^{\pn{\times}1}$ and gives the exact values of $\varphi_{\!\!_n}^{[k]}$ at the nodes up to the error made by the routine used to solve the system. 

\medskip

\noindent
All three, $\varphi_{_{\!\!_n}}$, $\varphi^{[k]}$ and $\varphi_{\!\!_n}^{[k]}$, are approximations of $\varphi$, although they approximate $\varphi$ in different ways:
\begin{eqnarray*}
\lim\limits_{n\to+\infty}\varphi_{_{\!\!_n}}&=&\varphi,\mbox{ but $\varphi_{_{\!\!_n}}$ is not computable,}\\
\lim\limits_{k\to+\infty}\varphi^{[k]}&=&\varphi,\mbox{ but $\varphi^{[k]}$ is not computable,}\\
\lim\limits_{k\to+\infty}\varphi_{\!\!_n}^{[k]}&=&\varphi\mbox{ for a fixed $n$ {\sl reasonably} large},
\end{eqnarray*}
where $\varphi_{\!\!_n}^{[k]}(s)$ could be known for all $s\in\lbr a\,,b\rbr$, if some involved integrals were calculated exactly in its natural interpolation formula. Summarizing:

\medskip

\noindent
-- Inconvenients of $\varphi_{_{\!\!_n}}$: There is no natural interpolation formula to compute $\varphi_{_{\!\!_n}}(s)$ for all $s\in\lbr a\,,b\rbr$. It is impossible to know its grid value $\varphi_{_{\!\!_n}}(t_{\pn,i})$ exactly and it will be approximated by the last iterate of the Newton-Kantorovich method. To compute the coefficient matrix and the right hand side of the linear system corresponding to each N-K iteration, some integrals must be approximated numerically with a higher order numerical quadrature better than the $n$-dependent approximations involved in the singularity subtraction scheme.

\medskip

\noindent
-- Inconvenients of $\varphi^{[k]}$: It is not computable at all and it must be approximated by $\varphi_{\!\!_n}^{[k]}$, issued from the singularity subtraction scheme.

\medskip

\noindent
-- Inconvenients of $\varphi_{\!\!_n}^{[k]}$: To compute $\varphi_{\!\!_n}^{[k]}(s)$ for a given $s\in\lbr a\,,b\rbr$, some integrals must be computed with a higher order numerical quadrature better than the $n$-dependent approximations involved in computations. Its grid values $\varphi_{\!\!_n}^{[k]}(t_{\pn,i})$ are the solutions $\msf{w}_n^{[k]}(i)$ of a linear system. As before, to compute the coefficient matrix and the right hand side of this system, some integrals must be approximated numerically with a higher order numerical quadrature better than the $n$-dependent approximations involved in the singularity subtraction scheme.

\medskip

\noindent
Since the rate of convergence in \eqref{NK} is at least linear (for low values of $n$) and can be almost quadratic (for reasonably large values of $n$), it is clear that the New Approach is the most intelligent and economic scheme to build an approximation of $\varphi$.

\medskip

\noindent
A major survey on numerical approximation of nonlinear integral equations is \cite{At1992}. This paper studies numerical methods for calculating fixed points of nonlinear integral operators, i.e. equations of the form $\varphi=K(\varphi)$ with the notation of our paper. This corresponds to the case $y=0$ and is less general than the work presented here since $y$ cannot be incorporated as a part of the integral operator $K$. Methods treated in \cite{At1992} include a product integration type scheme for weakly singular Hammerstein operators, projection methods and Nyström methods. As in our paper, all those methods require the solution of finite-dimensional systems of nonlinear equations. An auxiliary numerical method is needed to solve these nonlinear finite-dimensional systems.

\bigskip

\begin{center}
{\bf Acknowledgements}
\end{center}
The second and fourth authors were partially supported by CMUP, which is financed by national funds through FCT – Fundação para a Ciência e Tecnologia, I.P., under the project with reference UIDB/00144/2020.
The research of the third author was partially financed by Portuguese Funds through FCT (Fundação para a Ciência e a Tecnologia) within the Projects UIDB/00013/2020 and UIDP/00013/2020. 


\end{document}